\newcommand{\referenza}{}
\newtheorem{thm}{Theorem}[section]
\newtheorem*{thm*}{Theorem \referenza}
\newtheorem*{cor*}{Corollary \referenza}
\newtheorem*{lem*}{Lemma \referenza}
\newtheorem*{prop*}{Proposition \referenza}
\newtheorem*{conj*}{Conjecture \referenza}
\newtheorem{rmk}[thm]{Remark}
\newtheorem{defi}[thm]{Definition}
\numberwithin{equation}{section}
\def \N {\mathbb N}
\def \R {\mathbb R}
\def \C {\mathbb C}
\def \Z {\mathbb Z}
\DeclareMathOperator{\imm}{im}
\title[Cohomological properties of non-K\"ahler manifolds]{Quantitative and qualitative cohomological properties for non-K\"ahler manifolds}
\author{Daniele Angella}
\address[Daniele Angella]{
Dipartimento di Matematica e Informatica ``Ulisse Dini''\\
Universit\`a degli Studi di Firenze\\
viale Morgagni 67/a\\
50134 Firenze, Italy
}
\email{daniele.angella@gmail.com}
\email{daniele.angella@unifi.it}
\author{Nicoletta Tardini}
\address[Nicoletta Tardini]{Dipartimento di Matematica\\
Universit\`a di Pisa\\
largo Bruno Pontecorvo 5\\
56127 Pisa, Italy
}
\email{tardini@mail.dm.unipi.it}
\keywords{complex manifold, non-K\"ahler geometry, Bott-Chern cohomology, Aeppli cohomology, $\partial\overline\partial$-Lemma}
\thanks{During the preparation of this work, the first author was also granted by a Junior Visiting Position at Centro di Ricerca ``Ennio de Giorgi'' in Pisa.
The first author is supported by the Project PRIN ``Varietà reali e complesse: geometria, topologia e analisi armonica'', by the Project FIRB ``Geometria Differenziale e Teoria Geometrica delle Funzioni'', by SNS GR14 grant ``Geometry of non-Kähler manifolds'', by SIR2014 project RBSI14DYEB ``Analytic aspects in complex and hypercomplex geometry'', and by GNSAGA of INdAM.
The second author is supported by GNSAGA of INdAM}
\subjclass[2010]{32Q99, 32C35}
\date{\today}
\begin{document}

\dedicatory{Dedicated to the memory of Professor Pierre Dolbeault.}

\begin{abstract}
 We introduce a ``qualitative property'' for Bott-Chern cohomology of complex non-K\"ahler manifolds, which is motivated in view of the study of the algebraic structure of Bott-Chern cohomology.
 We prove that such a property characterizes the validity of the $\partial\overline\partial$-Lemma.
 This follows from a quantitative study of Bott-Chern cohomology. In this context, we also prove a new bound on the dimension of the Bott-Chern cohomology in terms of the Hodge numbers.
 We also give a generalization of this upper bound, with applications to symplectic cohomologies.
\end{abstract}

\maketitle

\section*{Introduction}

In this note, we investigate quantitative properties of the Bott-Chern and Aeppli cohomologies (namely, relations between their dimensions in terms of the Betti and Hodge numbers) towards the study of their qualitative properties (namely, their algebraic structure). See also \cite{angella-3}, where the results have been announced.

Recall that the Bott-Chern \cite{bott-chern} and Aeppli cohomologies \cite{aeppli} are important tools for the study of the cohomological properties of compact complex (possibly non-K\"ahler) manifolds. They yield, in a sense, a bridge between the de Rham cohomology --- a topological invariant --- and the Dolbeault cohomology --- a holomorphic invariant. In this sense, it is expected that they could provide both a deeper understanding of the holomorphic structure, and natural tools in investigating geometric aspects, see, e.g., \cite{schweitzer, tosatti-weinkove, popovici}.

\medskip

The inequality {\itshape \`a la} Fr\"olicher for the Bott-Chern cohomology in \cite{angella-tomassini-3} provides a lower bound on the dimensions of the Bott-Chern cohomology groups in terms of the Betti numbers. Furthermore, it is proven that this lower bound is reached if and only if the complex manifold satisfies the $\partial\overline\partial$-Lemma, namely, there is a {\em natural} isomorphism between the Bott-Chern and the Aeppli cohomologies. Our first result is an upper bound for the Bott-Chern cohomology in terms of the Hodge numbers. Note that the result is essentially algebraic. At least from the purely algebraic point of view, an upper bound in terms of the Betti numbers is not expected in general.

\renewcommand{\referenza}{\ref{thm:upper-bound} and Remark \ref{rmk:upper-bound}}
\begin{thm*}
 Let $X$ be a compact complex manifold of complex dimension $n$. Then, for any $k\in\Z$,
 \begin{eqnarray*}
 \lefteqn{ \sum_{p+q=k} \dim_\C H^{p,q}_{A}(X) } \\[5pt]
 &\leq& \min\{k+1, (2n-k)+1\} \cdot \left( \sum_{p+q=k} \dim_\C H^{p,q}_{\overline\partial}(X) + \sum_{p+q=k+1} \dim_\C H^{p,q}_{\overline\partial}(X) \right) \\[5pt]
 &\leq& (n+1) \cdot \left( \sum_{p+q=k} \dim_\C H^{p,q}_{\overline\partial}(X) + \sum_{p+q=k+1} \dim_\C H^{p,q}_{\overline\partial}(X) \right) \;,
 \end{eqnarray*}
 and
 \begin{eqnarray*}
 \lefteqn{ \sum_{p+q=k} \dim_\C H^{p,q}_{BC}(X) } \\[5pt]
 &\leq& \min\{k+1, (2n-k)+1\} \cdot \left( \sum_{p+q=k} \dim_\C H^{p,q}_{\overline\partial}(X) + \sum_{p+q=k-1} \dim_\C H^{p,q}_{\overline\partial}(X) \right) \\[5pt]
 &\leq& (n+1) \cdot \left( \sum_{p+q=k} \dim_\C H^{p,q}_{\overline\partial}(X) + \sum_{p+q=k-1} \dim_\C H^{p,q}_{\overline\partial}(X) \right) \;.
 \end{eqnarray*}
\end{thm*}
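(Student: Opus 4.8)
The plan is to prove the Bott–Chern chain of inequalities and then deduce the Aeppli one by duality. On a compact complex manifold the Hodge--$*$ operator yields the dimension identity $\dim_\C H^{p,q}_{BC}(X)=\dim_\C H^{n-p,n-q}_{A}(X)$, so that $\sum_{p+q=k}\dim_\C H^{p,q}_{BC}(X)=\sum_{p+q=2n-k}\dim_\C H^{p,q}_{A}(X)$; combined with Serre duality $\dim_\C H^{p,q}_{\overline\partial}(X)=\dim_\C H^{n-p,n-q}_{\overline\partial}(X)$, the substitution $k\mapsto 2n-k$ (under which $\min\{k+1,(2n-k)+1\}$ is invariant and the $\overline\partial$-numbers in degrees $k,k-1$ become those in degrees $2n-k,2n-k+1$) turns the two displayed statements into one another. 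Hence it suffices to establish the Bott–Chern bound.

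The heart of the argument is a one-step inequality,
\[
\dim_\C H^{p,q}_{BC}(X)\;\le\;\dim_\C H^{p,q}_{\overline\partial}(X)+\dim_\C H^{p,q-1}_{\partial}(X)+\dim_\C H^{p+1,q-1}_{BC}(X),
\]
which I would iterate along the anti-diagonal $p+q=k$. To obtain it, I first use the identity-induced map $\iota\colon H^{p,q}_{BC}(X)\to H^{p,q}_{\overline\partial}(X)$: its image has dimension at most $\dim_\C H^{p,q}_{\overline\partial}(X)$, and since $\operatorname{im}\overline\partial\subseteq\ker\overline\partial$ its kernel is $N^{p,q}:=(\operatorname{im}\overline\partial\cap\ker\partial)/\operatorname{im}\partial\overline\partial$ in bidegree $(p,q)$. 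Next I introduce the connecting map $\delta\colon H^{p,q-1}_{\partial}(X)\to H^{p,q}_{BC}(X)$, $[\gamma]_{\partial}\mapsto[\overline\partial\gamma]_{BC}$, which is well defined (if $\gamma=\partial\rho$ then $\overline\partial\gamma=-\partial\overline\partial\rho\in\operatorname{im}\partial\overline\partial$), and whose image lies inside $N^{p,q}$; thus $\dim_\C\operatorname{im}\delta\le\dim_\C H^{p,q-1}_{\partial}(X)$.

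It then remains to bound the quotient $N^{p,q}/\operatorname{im}\delta$. Representing a class of $N^{p,q}$ as $[\overline\partial\gamma]_{BC}$ with $\partial\overline\partial\gamma=0$, the assignment $[\overline\partial\gamma]_{BC}\mapsto[\partial\gamma]_{BC}$ lands in $H^{p+1,q-1}_{BC}(X)$, as $\partial\gamma$ is both $\partial$-closed and $\overline\partial$-closed (indeed $\overline\partial\partial\gamma=-\partial\overline\partial\gamma=0$). The ambiguity in the choice of $\gamma$ takes values in $\partial(\ker\overline\partial)$, that is, in the image of the analogous map $\delta'\colon H^{p,q-1}_{\overline\partial}(X)\to H^{p+1,q-1}_{BC}(X)$ obtained by exchanging $\partial$ and $\overline\partial$; hence one gets a well-defined $\Theta\colon N^{p,q}\to H^{p+1,q-1}_{BC}(X)/\operatorname{im}\delta'$, and a diagram chase identifies $\ker\Theta=\operatorname{im}\delta$. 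This produces an injection $N^{p,q}/\operatorname{im}\delta\hookrightarrow H^{p+1,q-1}_{BC}(X)/\operatorname{im}\delta'$, and the one-step inequality follows. Iterating it (the second index strictly decreases and $H^{p,q}_{BC}(X)=0$ once $q<0$) and using the conjugation identity $\dim_\C H^{a,b}_{\partial}(X)=\dim_\C H^{b,a}_{\overline\partial}(X)$, I get, for \emph{each} bidegree with $p+q=k$, the estimate $\dim_\C H^{p,q}_{BC}(X)\le\sum_{p+q=k}\dim_\C H^{p,q}_{\overline\partial}(X)+\sum_{p+q=k-1}\dim_\C H^{p,q}_{\overline\partial}(X)$; summing over the $\min\{k+1,(2n-k)+1\}$ bidegrees on the anti-diagonal is precisely the assertion.

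The step I expect to be the main obstacle is the precise bookkeeping behind $\Theta$: verifying that $[\overline\partial\gamma]_{BC}\mapsto[\partial\gamma]_{BC}$ descends to a well-defined map on all of $N^{p,q}$ modulo $\operatorname{im}\delta'$ (not merely on representatives with $\gamma\in\ker\partial$), and that $\ker\Theta$ is \emph{exactly} $\operatorname{im}\delta$. This is a sign-sensitive chase in the double complex $(A^{\bullet,\bullet},\partial,\overline\partial)$, and it is where one must be most careful to avoid an off-by-one error in the Dolbeault degrees entering the final sum.
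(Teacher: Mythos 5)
Your proof is correct, but it is organized quite differently from the paper's. The paper proves the \emph{Aeppli} bound directly by a zigzag construction on representatives: starting from a nonzero Aeppli class $[\alpha^{p,q}]$, it runs a case analysis on the vanishing of $\partial\alpha^{p,q}$ and $\overline\partial\alpha^{p,q}$, builds a staircase of forms along the anti-diagonal whose ``ends'' are nontrivial Dolbeault or conjugate-Dolbeault classes in total degrees $k$ and $k+1$, and then counts by choosing a basis of the space of zigzags with linearly independent ends; the Bott-Chern bound is then deduced via Schweitzer and Serre duality. You do the mirror image (Bott-Chern first, Aeppli by the same dualities), and, more substantively, you replace the zigzag-and-basis counting by a clean one-step inequality
\[
\dim_\C H^{p,q}_{BC}(X)\;\le\;\dim_\C H^{p,q}_{\overline\partial}(X)+\dim_\C H^{p,q-1}_{\partial}(X)+\dim_\C H^{p+1,q-1}_{BC}(X)
\]
proved with well-defined connecting maps $\iota$, $\delta$, $\delta'$, $\Theta$ (your $N^{p,q}=(\imm\overline\partial\cap\ker\partial)/\imm\partial\overline\partial$ is exactly Varouchas' group $D^{p,q}$, which the paper only invokes later, in the proof of the $\partial\overline\partial$-Lemma characterization), which you then iterate until the second index becomes negative. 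I checked the delicate points you flagged: $\Theta$ is indeed well defined modulo $\imm\delta'$ (two choices $\overline\partial\gamma_1=\overline\partial\gamma_2+\partial\overline\partial\rho$ force $\gamma_1=\gamma_2-\partial\rho+\sigma$ with $\overline\partial\sigma=0$, so $\partial\gamma_1-\partial\gamma_2=\partial\sigma\in\partial(\ker\overline\partial)$), and $\ker\Theta=\imm\delta$ holds exactly (if $\partial\gamma=\partial\sigma+\partial\overline\partial\tau$ with $\overline\partial\sigma=0$, then $\gamma':=\gamma-\sigma-\overline\partial\tau$ is $\partial$-closed and $\overline\partial\gamma'$ represents the same Bott-Chern class). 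Note also that the two counting schemes are ``transposed'': the paper bounds (number of independent zigzags, at most $h^k_{\overline\partial}+h^{k+1}_{\overline\partial}$) times (zigzag length, at most $\min\{k+1,(2n-k)+1\}$), whereas you bound each single $\dim_\C H^{p,q}_{BC}(X)$ by $h^k_{\overline\partial}+h^{k-1}_{\overline\partial}$ and multiply by the number of bidegrees on the anti-diagonal. What your route buys is rigor: it avoids the paper's informal step of choosing a basis of zigzags with linearly independent ends, which is the vaguest point of the published argument. What the paper's route buys is explicitness about representatives, and its zigzag picture is what motivates the generalization to arbitrary bounded-width double complexes (Theorem \ref{thm:upper-bound-algebraic}); your argument generalizes to that setting equally well, provided one keeps $H_{\partial}$ and $H_{\overline\partial}$ separate on the right-hand side, since the conjugation identity $\dim H^{a,b}_{\partial}=\dim H^{b,a}_{\overline\partial}$ is special to complex manifolds.
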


\medskip

We then get that the difference $\sum_{p+q=k} \left( \dim_\C H^{p,q}_{BC}(X) - \dim_\C H^{p,q}_{A}(X) \right)$ is bounded from both above and below by the Hodge numbers. The second result that we prove is a characterization of the $\partial\overline\partial$-Lemma in terms of this quantity.

\renewcommand{\referenza}{\ref{thm:char-deldelbar-minus}}
\begin{thm*}
 A compact complex manifold $X$ satisfies the $\partial\overline\partial$-Lemma if and only if, for any $k\in\Z$, there holds
 $$ \sum_{p+q=k} \left( \dim_\C H^{p,q}_{BC}(X) - \dim_\C H^{p,q}_{A}(X) \right) \;=\; 0 \;. $$
\end{thm*}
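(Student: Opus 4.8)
The plan is to deduce the statement from the decomposition of the relevant double complex into indecomposable summands, the combinatorial mechanism that already underlies the Fr\"olicher-type inequality of \cite{angella-tomassini-3}. One implication is immediate: if $X$ satisfies the $\partial\overline\partial$-Lemma, then the natural morphisms $H^{p,q}_{BC}(X)\to H^{p,q}_{\overline\partial}(X)\to H^{p,q}_{A}(X)$ are isomorphisms for every $(p,q)$, whence $\dim_\C H^{p,q}_{BC}(X)=\dim_\C H^{p,q}_{A}(X)$ and every single summand of $\sum_{p+q=k}\left(\dim_\C H^{p,q}_{BC}(X)-\dim_\C H^{p,q}_{A}(X)\right)$ vanishes.

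For the converse I would, as in \cite{angella-tomassini-3}, pass to a finite-dimensional bicomplex carrying the same Bott-Chern, Aeppli, Dolbeault, conjugate Dolbeault and de Rham cohomologies, and decompose it into its indecomposable summands, which are \emph{squares} and \emph{zigzags}. Squares are acyclic for all five cohomologies, and a zigzag of length one contributes one dimension to both $H_{BC}$ and $H_A$ in the same bidegree, so neither affects the quantity $\delta_k:=\sum_{p+q=k}\left(\dim_\C H^{p,q}_{BC}(X)-\dim_\C H^{p,q}_{A}(X)\right)$; moreover the $\partial\overline\partial$-Lemma is equivalent to the absence of zigzags of length at least two.

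The crucial step is to locate $H_{BC}$ and $H_A$ on a zigzag of length at least two. Using only $d^2=0$, I would check that such a zigzag is an alternating chain of \emph{sources} (bidegrees with no incoming differential) and \emph{sinks} (bidegrees with no outgoing differential); since each arrow raises the total degree by one, all of its sources lie in a single total degree $d$ and all of its sinks in total degree $d+1$. The sinks are precisely the classes surviving in $H_{BC}$ and the sources those surviving in $H_A$, so such a zigzag contributes its number of sinks to $\sum_{p+q=d+1}\dim_\C H^{p,q}_{BC}(X)$ and its number of sources to $\sum_{p+q=d}\dim_\C H^{p,q}_{A}(X)$, and nothing more; in particular it has at least one source and at least one sink.

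Granting this, $\delta_k$ receives a positive contribution only from zigzags whose sources sit in total degree $k-1$, and a negative contribution only from those whose sources sit in degree $k$. The main obstacle is exactly that such contributions from different summands could a priori cancel, so that $\delta_k=0$ for all $k$ need not force each summand to vanish; I would resolve this by passing to the extreme degree. Assuming $X$ fails the $\partial\overline\partial$-Lemma, at least one zigzag of length at least two occurs; let $d_{\min}$ be the smallest total degree in which the sources of such a zigzag lie. No zigzag of length at least two has its sources in degree $d_{\min}-1$, so nothing produces a positive term in degree $d_{\min}$, while every zigzag with sources in degree $d_{\min}$ contributes strictly negatively. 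Hence $\delta_{d_{\min}}<0$, contradicting $\delta_k=0$ for all $k$, and the hypothesis therefore forces the $\partial\overline\partial$-Lemma.
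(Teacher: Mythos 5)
Your proof is correct, but it runs along a genuinely different route from the paper's. The paper proves the nontrivial implication by purely dimensional bookkeeping: it invokes the exact sequences of Varouchas \cite{varouchas} relating $H^{\bullet,\bullet}_{\overline\partial}(X)$, $H^{\bullet,\bullet}_{BC}(X)$, $H^{\bullet,\bullet}_{A}(X)$ to the auxiliary spaces $A^{\bullet,\bullet},\dots,F^{\bullet,\bullet}$, derives the identity $h^k_{BC}-h^k_A=2b^k-2b^{k+1}+f^k-a^k$, and then runs an induction on $k$ (anchored at $b^0=0$) to conclude $a^k=b^k=c^k=0$, whence $H^{\bullet,\bullet}_{\overline\partial}(X)\to H^{\bullet,\bullet}_{A}(X)$ is an isomorphism and \cite[Lemma 5.15]{deligne-griffiths-morgan-sullivan} applies. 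You instead use the structure theory of bounded double complexes: decomposition into squares and zigzags, the computation that on a zigzag of length at least two the Bott-Chern classes are exactly the sinks (all in one total degree $d+1$) and the Aeppli classes exactly the sources (all in degree $d$), and then an extremal-degree argument: if a long zigzag exists, the one whose sources sit in the minimal degree $d_{\min}$ forces $\delta_{d_{\min}}<0$. Your zigzag computations and the minimal-degree step are all correct. It is worth noting that the two proofs are secretly exploiting the same phenomenon --- boundedness of the complex in degree $0$ (the paper through $b^0=0$, you through the existence of $d_{\min}$) --- which is exactly what fails for the periodic, unbounded symplectic double complex, consistently with the paper's final remark that the theorem does \emph{not} generalize to the Tseng-Yau cohomologies. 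What your approach buys is transparency and generality: it localizes $H_{BC}$ and $H_{A}$ on explicit indecomposables, it shows that the hypothesis in fact forces $\delta_k<0$ at some degree whenever the $\partial\overline\partial$-Lemma fails (a sharper dichotomy), and it applies verbatim to any bounded double complex, in the spirit of the paper's Section 6. What the paper's approach buys is self-containedness: it needs only exact sequences already available in the 2015 literature, with no structure theorem as input.

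One caveat you should fix. The decomposition into squares and zigzags is \emph{not} ``the combinatorial mechanism that already underlies'' \cite{angella-tomassini-3}: that paper, like this one, argues via Varouchas-type exact sequences and contains no such decomposition. The structure theorem you need (every bounded double complex of vector spaces splits as a direct sum of squares and zigzags, with no finite-dimensionality hypothesis) is a genuine theorem, essentially due to Khovanov and proved in this generality by Stelzig, and you must import it from there. Relatedly, your preliminary step of ``passing to a finite-dimensional bicomplex with the same cohomologies'' is both unnecessary and mildly circular --- the natural way to produce such a bicomplex is the decomposition itself; it is cleaner to apply the structure theorem directly to the double complex of forms, which is bounded (supported in $0\le p,q\le n$), and note that all the sums you manipulate are finite because the cohomologies of a compact complex manifold are finite-dimensional.
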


By the Schweitzer duality between the Bott-Chern cohomology and the Aeppli cohomology \cite[\S2.c]{schweitzer}, the above condition can be written just in terms of the Bott-Chern cohomology as follows: for any $k\in\Z$, there holds
$$ \sum_{p+q=k} \dim_\mathbb{C} H^{p,q}_{BC}(X) \;=\; \sum_{p+q=2n-k} \dim_\mathbb{C} H^{p,q}_{BC}(X) \;; $$
compare it with the Poincar\'e duality and the Serre duality.

\medskip

Note that the analogue of the Poincar\'e and Serre dualities in the context of Bott-Chern cohomology is the Schweitzer duality between Bott-Chern and Aeppli cohomologies, see \cite{schweitzer}. 
Namely, the Hermitian duality does not preserve the Bott-Chern cohomology, in general. 
With the aim of studying the algebraic structure of the Bott-Chern cohomology induced from the space of forms, we introduce a property, to which we refer as {\em qualitative Kodaira-Spencer-Schweitzer property}, (compare \cite{kodaira-spencer-3, schweitzer}, where the Bott-Chern Laplacian is introduced and studied). It requires that the natural pairing
$$ H^{\bullet,\bullet}_{BC}(X) \times H^{\bullet,\bullet}_{BC}(X) \to \C \;, \qquad \left( [\alpha], [\beta] \right) \mapsto \int_X \alpha\wedge\beta $$
induced by the wedge product and by the pairing with the fundamental class of $X$ is non-degenerate.
This was initially motivated by attempting to understand an analogue of the Sullivan theory of formality in the context of Bott-Chern cohomology (see also \cite{angella-tomassini-6, tardini-tomassini} for other attempts in this direction).

As a straightforward consequence of the quantitative characterization in Theorem \ref{thm:char-deldelbar-minus}, the qualitative Kodaira-Spencer-Schweitzer property turns out to characterize the $\partial\overline\partial$-Lemma. 

\renewcommand{\referenza}{\ref{thm:main-thm}}
\begin{thm*}
 Let $X$ be a compact complex manifold. Then $X$ satisfies the qualitative Kodaira-Spencer-Schweitzer property if and only if $X$ satisfies the $\partial\overline\partial$-Lemma.
\end{thm*}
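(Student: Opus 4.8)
The plan is to prove both implications by pinning down exactly what non-degeneracy of the wedge pairing means bidegree by bidegree, and then feeding the resulting numerical identity into Theorem \ref{thm:char-deldelbar-minus}. The starting observation is that the pairing $([\alpha],[\beta])\mapsto\int_X\alpha\wedge\beta$ is compatible with the bigrading: since $\int_X\alpha\wedge\beta$ can be non-zero only when $\alpha$ has bidegree $(p,q)$ and $\beta$ has complementary bidegree $(n-p,n-q)$, the total pairing splits as an orthogonal sum of the bidegree pairings $H^{p,q}_{BC}(X)\times H^{n-p,n-q}_{BC}(X)\to\C$. Hence the qualitative Kodaira-Spencer-Schweitzer property is equivalent to each of these components being a perfect pairing. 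I will also use that, under the natural projection $\iota\colon H^{p,q}_{BC}(X)\to H^{p,q}_{A}(X)$ sending a Bott-Chern class to the Aeppli class of the same representative, the wedge pairing on Bott-Chern cohomology is nothing but the Schweitzer duality pairing $H^{p,q}_{BC}(X)\times H^{n-p,n-q}_{A}(X)\to\C$ of \cite{schweitzer} precomposed with $\mathrm{id}\times\iota$.

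For the implication ``qualitative property $\Rightarrow$ $\partial\overline\partial$-Lemma'' I would argue purely numerically. If each pairing $H^{p,q}_{BC}(X)\times H^{n-p,n-q}_{BC}(X)\to\C$ is perfect, then in particular $\dim_\C H^{p,q}_{BC}(X)=\dim_\C H^{n-p,n-q}_{BC}(X)$ for every $(p,q)$. Summing over $p+q=k$ and reindexing by $(n-p,n-q)$ yields $\sum_{p+q=k}\dim_\C H^{p,q}_{BC}(X)=\sum_{p+q=2n-k}\dim_\C H^{p,q}_{BC}(X)$ for every $k$, which is precisely the Schweitzer-dual reformulation of the hypothesis of Theorem \ref{thm:char-deldelbar-minus}. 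Invoking that theorem then delivers the $\partial\overline\partial$-Lemma at once.

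For the converse I would use that the $\partial\overline\partial$-Lemma forces all the natural maps between Bott-Chern, Dolbeault, Aeppli and de Rham cohomologies to be isomorphisms; in particular $\iota\colon H^{p,q}_{BC}(X)\to H^{p,q}_{A}(X)$ is an isomorphism in every bidegree. Since the Schweitzer pairing $H^{p,q}_{BC}(X)\times H^{n-p,n-q}_{A}(X)\to\C$ is always non-degenerate, the factorization above shows that the wedge pairing on Bott-Chern cohomology, being the composition of a perfect pairing with an isomorphism in the second variable, is again perfect; thus the qualitative property holds.

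The genuinely new input is Theorem \ref{thm:char-deldelbar-minus}, so, granting it, the only real work is the bookkeeping just described and I expect no serious obstacle. The one point deserving care is the factorization of the wedge pairing through Schweitzer duality: one must check that $\int_X\alpha\wedge\beta$ descends to Bott-Chern classes (replacing $\alpha$ by $\alpha+\partial\overline\partial\gamma$ leaves the integral unchanged, since $\beta$ is $\partial$- and $\overline\partial$-closed and $X$ is closed), and then correctly read off that the left and right kernels of this pairing are the annihilator of $\imm\iota$ and the kernel of $\iota$ respectively, so that perfectness is equivalent to $\iota$ being an isomorphism. This identification is exactly what links the qualitative statement to the dimension count, and hence to Theorem \ref{thm:char-deldelbar-minus}.
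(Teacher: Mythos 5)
Your proposal is correct. The first implication (qualitative property $\Rightarrow$ $\partial\overline\partial$-Lemma) is essentially the paper's own argument: non-degeneracy forces $\dim_\C H^{p,q}_{BC}(X)=\dim_\C H^{n-p,n-q}_{BC}(X)$ in every bidegree, which via Schweitzer duality \cite[\S2.c]{schweitzer} is exactly the hypothesis $h^k_{BC}=h^k_{A}$ of Theorem \ref{thm:char-deldelbar-minus}. Where you genuinely diverge is the converse. The paper fixes a Hermitian metric, takes the $\tilde\Delta_{BC}$-harmonic representative $\alpha_{h_{BC}}$ of a given class, uses surjectivity of $H^{\bullet,\bullet}_{BC}(X)\to H^{\bullet,\bullet}_{A}(X)$ to correct $\overline{*}\alpha_{h_{BC}}$ by $\partial\gamma+\overline\partial\eta$ into a $\partial$- and $\overline\partial$-closed form, and pairs the original class with the resulting Bott-Chern class to get $\int_X\alpha_{h_{BC}}\wedge\overline{*}\alpha_{h_{BC}}=\|\alpha_{h_{BC}}\|^2_{L^2}>0$. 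You instead factor the wedge pairing as the Schweitzer pairing $H^{p,q}_{BC}(X)\times H^{n-p,n-q}_{A}(X)\to\C$ precomposed with $\mathrm{id}\times\iota$, quote Schweitzer's non-degeneracy of that pairing, and conclude since the $\partial\overline\partial$-Lemma makes $\iota$ an isomorphism \cite[Lemma 5.15]{deligne-griffiths-morgan-sullivan}. Your route is metric-free and shorter, at the price of invoking Schweitzer's duality theorem as a black box, whereas the paper's computation is in effect an explicit, constructive verification of the same non-degeneracy. Note also that your closing observation --- left kernel $=$ annihilator of $\imm\iota$, right kernel $=\ker\iota$ --- is stronger than what you actually use: it shows the qualitative Kodaira-Spencer-Schweitzer property is \emph{equivalent} to $\iota$ being an isomorphism in every bidegree, so both implications would follow at once from the definition of the $\partial\overline\partial$-Lemma together with \cite[Lemma 5.15]{deligne-griffiths-morgan-sullivan} and Schweitzer duality, making the detour through the quantitative Theorem \ref{thm:char-deldelbar-minus} (and hence through the Varouchas exact sequences) optional.
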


We can interpret this qualitative property in view of ``formality'' (in a wider sense) with respect to the duality structure and to the Bott-Chern cohomology functor. In this context, Bott-Chern cohomology reveals itself as an invariant strong enough to force natural isomorphisms; compare also \cite[Theorem B]{angella-tomassini-3}.

\medskip

Finally, we state a similar upper bound for a more general class of double complexes in Theorem \ref{thm:upper-bound-algebraic}. This allows to get results about the symplectic cohomologies introduced and studied by L.-S. Tseng and S.-T. Yau \cite{tseng-yau-1, tseng-yau-2}.

\renewcommand{\referenza}{\ref{thm:upper-bound-symplectic}}
\begin{thm*}
 Let $X$ be a compact differentiable manifold of dimension $2n$ endowed with a symplectic structure $\omega$. Then, for any $k\in\Z/2\Z$,
 $$
 \sum_{h = k \,\mathrm{mod}\, 2} \dim_\R H^{h}_{d+d^\Lambda}(X)
 \;\leq\;
 2(2n+1) \cdot \sum_{h\in\Z} \dim_\R H^{h}_{dR}(X;\R) \;,
 $$
 and
 $$
 \sum_{h = k \,\mathrm{mod}\, 2} \dim_\R H^{h}_{dd^\Lambda}(X)
 \;\leq\;
 2(2n+1) \cdot \sum_{h\in\Z} \dim_\R H^{h}_{dR}(X;\R) \;.
 $$
\end{thm*}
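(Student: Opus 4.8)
The plan is to deduce the statement from the abstract double-complex inequality of Theorem~\ref{thm:upper-bound-algebraic}, exactly as Theorem~\ref{thm:upper-bound} is the complex-geometric instance of that algebraic result. Recall that the Tseng--Yau symplectic cohomologies are the Bott--Chern and Aeppli analogues attached to the pair of operators $d$ and $d^\Lambda=[d,\Lambda]$ on $\Omega^\bullet(X)$, which satisfy $d^2=(d^\Lambda)^2=0$ and $dd^\Lambda=-d^\Lambda d$; concretely
\[ H^\bullet_{d+d^\Lambda}(X)=\frac{\ker d\cap\ker d^\Lambda}{\imm(dd^\Lambda)}, \qquad H^\bullet_{dd^\Lambda}(X)=\frac{\ker(dd^\Lambda)}{\imm d+\imm d^\Lambda}. \]
The essential structural point is that $d$ raises and $d^\Lambda$ lowers the form degree by one, so the total operator $d+d^\Lambda$ changes parity; this is what forces the $\Z/2\Z$-grading appearing in the statement, and it means the two differentials couple only form degrees of a fixed parity.

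First I would encode $(\Omega^\bullet(X),d,d^\Lambda)$ as a bounded double complex to which Theorem~\ref{thm:upper-bound-algebraic} applies. Reindexing by $A^{i,j}:=\Omega^{i-j}(X)$ turns $d$ into the first differential $A^{i,j}\to A^{i+1,j}$ and $d^\Lambda$ into the second $A^{i,j}\to A^{i,j+1}$; along each line $i+j=\mathrm{const}$ only forms of one parity occur, and the complex is bounded since $\Omega^h=0$ unless $0\le h\le 2n$. With this identification, Theorem~\ref{thm:upper-bound-algebraic} bounds the total dimension of the symplectic Bott--Chern (respectively Aeppli) cohomology in a fixed parity class by a combinatorial constant times the total dimension of the cohomology of one of the two differentials---the analogue of the Dolbeault cohomology $H_{\overline\partial}$ appearing on the right-hand side of Theorem~\ref{thm:upper-bound}---summed over two neighbouring degrees.

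The next step is to recognise that single-operator cohomology. The cohomology of $d$ alone is the de Rham cohomology $H^\bullet_{dR}(X;\R)$; and the cohomology of $d^\Lambda$ alone, $\ker d^\Lambda/\imm d^\Lambda$, is isomorphic to it via the symplectic star operator $*$ (cf.~\cite{tseng-yau-1}), which conjugates $d^\Lambda$ into $d$ up to sign, $d^\Lambda=\pm\,*\,d\,*$, while flipping the degree $h\mapsto 2n-h$. Either way, the single-differential cohomology entering the abstract bound consists of de Rham Betti numbers, so the right-hand side collapses to a multiple of $\sum_{h\in\Z}\dim_\R H^h_{dR}(X;\R)$, independently of the parity class.

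It then remains to extract the constant. The combinatorial factor produced by Theorem~\ref{thm:upper-bound-algebraic} is the number of nonzero terms met along the relevant diagonal, which is at most $2n+1$ because the complex has length $2n+1$; the remaining factor $2$ absorbs the two neighbouring degrees entering the bound, each de Rham degree being counted at most twice once one sums over a parity class. This yields the asserted $2(2n+1)$. The main obstacle, I expect, is the first step: choosing the folding so that the hypotheses of Theorem~\ref{thm:upper-bound-algebraic} hold verbatim while the complex stays bounded and the grading remains compatible with the parity sum, and then tracking the degree shift through $*$ carefully enough that the identification $\ker d^\Lambda/\imm d^\Lambda\cong H_{dR}$ and the bookkeeping of the neighbouring degrees deliver the constant $2(2n+1)$ and nothing larger.
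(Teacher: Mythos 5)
Your proposal is correct and takes essentially the same route as the paper: your reindexing $A^{i,j}:=\Omega^{i-j}(X)$ is precisely the paper's double complex $B^{\bullet_1,\bullet_2}=\wedge^{\bullet_1-\bullet_2}X\otimes\beta^{\bullet_2}$ (the formal variable $\beta$ is mere bookkeeping), to which Theorem \ref{thm:upper-bound-algebraic} is applied with $\ell=1$ and $N=2n$, the row and column cohomologies both reducing to de Rham cohomology (via the symplectic star for $d^\Lambda$), which yields the constant $2(2n+1)$ exactly as you describe. The only wording slip: this double complex is \emph{not} bounded---it extends infinitely along the diagonal---but what the hypothesis of Theorem \ref{thm:upper-bound-algebraic} actually requires, and what your folding does satisfy, is support in a shifted vertical strip of finite width $N+1=2n+1$.
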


\bigskip

\noindent{\sl Acknowledgments.}
Part of this work has been written in occasion of the Workshop ``Bielefeld Geometry \& Topology Days'' held at Bielefeld University on July 2nd--3rd, 2015: the first author warmly thanks the organizer Giovanni Bazzoni for the kind invitation and hospitality. We acknowledge many important discussions with Adriano Tomassini on the subject. Thanks also to Giovanni Bazzoni and Michela Zedda for interesting conversations and useful suggestions, and to Valentino Tosatti and Jim Stasheff for helpful comments that improved the presentation of the paper.

\section{Preliminaries and notations}
Let $X$ be a compact complex manifold. We recall that the {\em Bott-Chern cohomology} \cite{bott-chern} is defined as
$$ H_{BC}^{\bullet,\bullet}(X) \;=\; \frac{\ker\partial\cap\ker\overline\partial}{\imm\partial\overline\partial} \;, $$
and the {\em Aeppli cohomology} \cite{aeppli} is defined as
$$ H_{A}^{\bullet,\bullet}(X) \;=\; \frac{\ker\partial\overline\partial}{\imm\partial+\imm\overline\partial} \;. $$
The same definitions can be stated, more generally, for a double complex $\left( B^{\bullet,\bullet}, \partial, \overline\partial \right)$ of vector spaces.

In \cite{schweitzer}, see also \cite{kodaira-spencer-3}, Hodge theory for the Bott-Chern and Aeppli cohomologies is developed. In particular, it follows that the Bott-Chern and Aeppli cohomology groups of compact complex manifolds are finite-dimensional vector spaces. Moreover, when a Hermitian metric is fixed on $X$, the $\C$-linear Hodge-$*$-operator induces an (un-natural) isomorphism between the Bott-Chern cohomology and the Aeppli cohomology.

The identity induces natural maps of (bi-)graded vector spaces between the Bott-Chern, Dolbeault, de Rham, and Aeppli cohomologies:
$$ \xymatrix{
  & H^{\bullet,\bullet}_{BC}(X) \ar[d]\ar[ld]\ar[rd] & \\
  H^{\bullet,\bullet}_{\partial}(X) \ar[rd] & H^{\bullet}_{dR}(X;\C) \ar[d] & H^{\bullet,\bullet}_{\overline\partial}(X) \ar[ld] \\
  & {\phantom{\;.}} H^{\bullet,\bullet}_{A}(X) \;. &
} $$
By definition, a compact complex manifold satisfies the {\em $\partial\overline\partial$-Lemma} if the natural map $H^{\bullet,\bullet}_{BC}(X)\to H^{\bullet,\bullet}_{A}(X)$ is injective. This is equivalent to any of the above maps being an isomorphism, see \cite[Lemma 5.15]{deligne-griffiths-morgan-sullivan}.

\medskip

We recall two results concerning a comparison of the dimensions of the above cohomologies. The first one is the classical {\em Fr\"olicher inequality}, \cite[Theorem 2]{frolicher}. It states that there is a spectral sequence of the form
$$ H^{\bullet,\bullet}_{\overline\partial}(X) \Rightarrow H^\bullet_{dR}(X;\C) \;, $$
whence the inequality, for any $k\in\Z$,
$$ \sum_{p+q=k} \dim_\C H^{p,q}_{\overline\partial}(X) - b_k \;\geq\; 0 \;, $$
where $b_k$ denotes the $k$-th Betti number of $X$.
(Sometimes, for clearness of notation, we will shorten, e.g., $h^k_A:=\sum_{p+q=k}\dim_\C H^{p,q}_{A}(X)$.)

In \cite{angella-tomassini-3}, an inequality {\itshape à la} Fr\"olicher for the Bott-Chern cohomology is studied. It provides a lower bound for the dimension of the Bott-Chern and Aeppli cohomologies in terms of the Betti numbers, and it yields also a characterization of the $\partial\overline\partial$-Lemma. 

\begin{thm}[{\cite[Theorem A, Theorem B]{angella-tomassini-3}}]
 Let $X$ be a compact complex manifold. Then, for any $k\in\Z$,
 $$ \Delta^k(X) \;:=\; \sum_{p+q=k} \left( \dim_\C H^{p,q}_{BC}(X) + \dim_\C H^{p,q}_{A}(X) \right) - 2\, b_k \;\geq\; 0 \;. $$
 Moreover, $X$ satisfies the $\partial\overline\partial$-Lemma if and only if, for any $k\in\Z$, there holds $\Delta^k(X)=0$.
\end{thm}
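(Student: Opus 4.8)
The plan is to reduce the statement to a purely linear-algebraic fact about the double complex $(A^{\bullet,\bullet},\partial,\overline\partial)$ of smooth forms, organized through its decomposition into indecomposable pieces. Over a field, a bounded double complex with commuting differentials splits, up to isomorphism, into \emph{squares} (four spaces joined by isomorphisms, realizing the image-of-$\partial\overline\partial$ configuration) and \emph{zigzags} (staircase strings alternating $\partial$- and $\overline\partial$-isomorphisms). The key point is that each indecomposable contributes to the four cohomologies $H_{BC}$, $H_A$, $H_{\overline\partial}$, $H_{dR}$ in a way one computes once and for all: a square is acyclic for all of them; a length-one zigzag (a single dot at bidegree $(p,q)$) contributes $1$ to each of $\dim_\C H^{p,q}_{BC}$, $\dim_\C H^{p,q}_{A}$ and to $b_{p+q}$, hence $0$ to $\Delta^{p+q}$; and a direct computation on short zigzags shows that every zigzag of length $\ge 2$ contributes a nonnegative amount to each $\Delta^k$ and a strictly positive amount to at least one $\Delta^k$. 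Granting this, the inequality $\Delta^k\ge 0$ is immediate upon summing contributions, in the spirit of the classical Fr\"olicher inequality.

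For the characterization I would first dispose of the easy direction: if $X$ satisfies the $\partial\overline\partial$-Lemma, then by \cite[Lemma 5.15]{deligne-griffiths-morgan-sullivan} the natural maps identify $H^{p,q}_{BC}$, $H^{p,q}_{\overline\partial}$ and $H^{p,q}_{A}$, while the Fr\"olicher spectral sequence degenerates at the first page, giving $b_k=\sum_{p+q=k}\dim_\C H^{p,q}_{\overline\partial}(X)$; substituting into the definition yields $\Delta^k=0$ for every $k$. The substantive converse is where the decomposition pays off: since each zigzag of length $\ge 2$ forces some $\Delta^k>0$, the vanishing of all $\Delta^k$ forbids any such piece, so the double complex consists only of squares and dots --- and this combinatorial shape is exactly equivalent to the injectivity of $H^{\bullet,\bullet}_{BC}(X)\to H^{\bullet,\bullet}_{A}(X)$, i.e.\ to the $\partial\overline\partial$-Lemma.

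The main obstacle is not the per-indecomposable computation but the passage from the infinite-dimensional complex of forms to a finite-dimensional model to which the splitting into squares and zigzags legitimately applies. I would handle this using the Hodge theory of Schweitzer \cite{schweitzer} (see also \cite{kodaira-spencer-3}): all four cohomologies are finite-dimensional, and one replaces $(A^{\bullet,\bullet},\partial,\overline\partial)$ by a quasi-isomorphic finite-dimensional double complex computing the same $H_{BC}$, $H_A$, $H_{\overline\partial}$ and $H_{dR}$. An equivalent route that sidesteps the structure theorem --- and is perhaps safer to write out in full --- is to build explicit Varouchas-type exact sequences linking $\bigoplus_{p+q=k}H^{p,q}_{BC}(X)$, $H^k_{dR}(X;\C)$ and $\bigoplus_{p+q=k}H^{p,q}_{A}(X)$, and to extract $\Delta^k\ge 0$ from rank--nullity, with the equality case corresponding to the vanishing of the relevant connecting homomorphisms and hence again to injectivity of $H^{\bullet,\bullet}_{BC}(X)\to H^{\bullet,\bullet}_{A}(X)$.
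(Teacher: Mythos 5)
Your proposal addresses a statement that this paper does not actually reprove: it is recalled from \cite{angella-tomassini-3}, so the relevant comparison is with that proof (whose technique the present paper reuses in its own Theorem \ref{thm:char-deldelbar-minus}). That proof works directly on the infinite-dimensional complex of forms and is elementary: it combines the two five-term exact sequences of Varouchas \cite{varouchas} (the same ones recalled in Section 3 here) to obtain, in the notation of that section, $h^k_{BC}+h^k_{A} = 2h^k_{\overline\partial}+a^k+f^k$, whence $\Delta^k(X) = a^k+f^k+2\bigl(h^k_{\overline\partial}-b_k\bigr)\geq 0$ by the Fr\"olicher inequality \cite{frolicher}; the equality case forces $a^k=f^k=0$ and $E_1$-degeneration, from which the $\partial\overline\partial$-Lemma follows via \cite[Lemma 5.15]{deligne-griffiths-morgan-sullivan}. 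Your primary route --- decomposing the double complex into squares and zigzags and summing per-indecomposable contributions --- is genuinely different, and its key computations are correct: squares and dots contribute $0$ to every $\Delta^k$; an even-length zigzag contributes positively to two consecutive $\Delta^k$'s; an odd-length zigzag of length $2r+1\geq 3$ contributes $r-1\geq 0$ and $r>0$ to consecutive degrees; and ``only dots and squares'' is indeed equivalent to injectivity of $H^{\bullet,\bullet}_{BC}\to H^{\bullet,\bullet}_{A}$. What your route buys is a transparent, local explanation of what $\Delta^k$ counts (and it generalizes to other linear combinations of cohomological dimensions); what the exact-sequence route buys is self-containedness --- no structure theorem for double complexes is needed. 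Your fallback route is, in substance, the cited proof.

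The one step you should not leave as stated is the reduction to a finite-dimensional model ``using the Hodge theory of Schweitzer.'' Hodge theory gives finite-dimensionality of the four cohomologies, but it does not produce a finite-dimensional sub-double-complex (the harmonic spaces are not stable under $\partial$ and $\overline\partial$), and an ordinary quasi-isomorphism of total complexes does \emph{not} in general preserve $H_{BC}$ and $H_{A}$; you would need a bigraded quasi-isomorphism inducing isomorphisms on both row and column cohomologies, and producing such a finite-dimensional model is essentially as deep as the structure theorem itself. The clean repair is to invoke the decomposition into squares and zigzags directly for the bounded double complex of forms --- it holds for bounded double complexes of arbitrary, possibly infinite-dimensional, vector spaces over a field --- and then use finiteness of the Hodge and Betti numbers to conclude that only finitely many non-square indecomposables occur, so that all the sums defining $\Delta^k(X)$ are finite and additivity over the decomposition is legitimate. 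With that adjustment your argument is complete, though it rests on machinery considerably heavier than the bookkeeping with exact sequences that the cited proof uses.
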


See \cite{angella-tomassini-5} for a generalization to double complexes, with applications to compact symplectic manifolds.

\section{An upper bound on the dimension of Bott-Chern cohomology for compact complex manifolds}
The result in \cite{angella-tomassini-3} provides a lower bound for the dimension of the Bott-Chern cohomology in terms of the Betti numbers; furthermore, it characterizes the $\partial\overline\partial$-Lemma.
In this section, we prove an upper bound for the dimension of the Bott-Chern cohomology in terms of the Hodge numbers. Note that a topological upper bound cannot be expected, in general.

\medskip

The result we prove is the following.
\begin{thm}\label{thm:upper-bound}
 Let $X$ be a compact complex manifold of complex dimension $n$. Then, for any $k\in\Z$,
 $$ \sum_{p+q=k} \dim_\C H^{p,q}_{A}(X) \;\leq\; (n+1) \cdot \left( \sum_{p+q=k} \dim_\C H^{p,q}_{\overline\partial}(X) + \sum_{p+q=k+1} \dim_\C H^{p,q}_{\overline\partial}(X) \right) \;. $$
\end{thm}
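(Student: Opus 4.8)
The plan is to bound each Aeppli number $\dim_\C H^{p,q}_{A}(X)$ and then to sum over the $\min\{k+1,(2n-k)+1\}\leq n+1$ bidegrees $(p,q)$ with $p+q=k$; the factor $n+1$ in the statement is a crude bound for the number of anti-diagonal entries, and it is exactly this count that I expect to multiply a single copy of $h^{k}_{\overline\partial}+h^{k+1}_{\overline\partial}$. The backbone of the argument is the pair of natural maps out of Aeppli cohomology, namely $\overline\partial\colon H^{p,q}_{A}(X)\to H^{p,q+1}_{\overline\partial}(X)$ and $\partial\colon H^{p,q}_{A}(X)\to H^{p+1,q}_{\partial}(X)$. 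These are well defined because $\overline\partial\left(\imm\partial+\imm\overline\partial\right)\subseteq\imm\overline\partial$ (and symmetrically for $\partial$), so a representative with $\partial\overline\partial\alpha=0$ is sent to the $\overline\partial$-closed form $\overline\partial\alpha$, and a change of representative alters $\overline\partial\alpha$ only by an element of $\imm\overline\partial$. Since complex conjugation yields $\dim_\C H^{p+1,q}_{\partial}(X)=\dim_\C H^{q,p+1}_{\overline\partial}(X)$, summing over the anti-diagonal turns all $\partial$-Dolbeault contributions back into $\overline\partial$-Dolbeault ones, so the whole estimate can be phrased through the numbers $h^{\bullet}_{\overline\partial}$ alone, as in the statement.

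First I would analyse the combined map $\Phi=(\partial,\overline\partial)\colon H^{p,q}_{A}(X)\to H^{p+1,q}_{\partial}(X)\oplus H^{p,q+1}_{\overline\partial}(X)$. Its image lies in two Dolbeault groups of total degree $k+1$, whence $\dim_\C\imm\Phi$, summed over $p+q=k$, is controlled by $h^{k+1}_{\overline\partial}$; this produces the $h^{k+1}_{\overline\partial}$ term. The kernel of $\Phi$ consists of the Aeppli classes admitting both a $\partial$-closed and a $\overline\partial$-closed cohomologous representative, i.e. of the ``Bott-Chern-like'' classes in bidegree $(p,q)$; these should be matched against the degree-$k$ Dolbeault groups, producing the $h^{k}_{\overline\partial}$ term. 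To make this precise I would assemble the maps above, together with the natural maps $H^{p,q}_{\overline\partial}(X)\to H^{p,q}_{A}(X)$ and $H^{p,q}_{\partial}(X)\to H^{p,q}_{A}(X)$, into a (Varouchas-type) long exact sequence in which the failure of $\ker\Phi$ to be the image of Dolbeault cohomology is measured by Bott-Chern groups.

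The delicate point is the control of $\ker\Phi$, and this is where the multiplicity $n+1$ is genuinely needed rather than a fixed constant. A single degree-$k$ (or degree-$(k+1)$) Dolbeault generator can be responsible for a whole string of Aeppli classes supported on the anti-diagonal $p+q=k$; in the language of the structure theory of the double complex $\left(A^{\bullet,\bullet}(X),\partial,\overline\partial\right)$, a long ``comb''-shaped zigzag has all of its Aeppli-contributing sources on one anti-diagonal, while it leaks into Dolbeault cohomology only at its two ends. I would therefore bound $\ker\Phi$ by descending induction along the Hodge filtration (equivalently, by peeling off the anti-diagonal one bidegree at a time): each step trades one position against a degree-$k$ Dolbeault class modulo a Bott-Chern correction, and the recursion has depth at most the number of anti-diagonal positions, namely $\min\{k+1,(2n-k)+1\}\leq n+1$. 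Summing the $\leq n+1$ per-position estimates, and using conjugation to absorb the $\partial$-Dolbeault terms, yields the asserted bound; the sharper constant $\min\{k+1,(2n-k)+1\}$ of the accompanying remark drops out of the same count.

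The main obstacle I anticipate is precisely the bookkeeping of these Bott-Chern correction terms: one must show that the accumulated multiplicity is exactly the number of anti-diagonal entries and no larger, and that conjugation symmetry correctly disposes of the pieces (such as a pure $\overline\partial$-arrow summand) that contribute to Aeppli cohomology but not at all to Dolbeault cohomology, for which the bound survives only after pairing with the conjugate piece. A clean alternative, bypassing the explicit exact sequences, is to invoke the decomposition of the bounded double complex into squares and zigzags: squares contribute nothing, and one then verifies the inequality on each conjugation orbit of zigzags, the extremal case being the long comb whose $\leq n+1$ Aeppli sources lie on a single anti-diagonal.
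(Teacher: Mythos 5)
Your proposal breaks at its very first step: the two ``backbone'' maps are identically zero. The map $\overline\partial\colon H^{p,q}_{A}(X)\to H^{p,q+1}_{\overline\partial}(X)$, $[\alpha]\mapsto[\overline\partial\alpha]_{\overline\partial}$, is well defined for the reason you give, but trivially so: the form $\overline\partial\alpha$ lies in $\imm\overline\partial$ by construction, hence its Dolbeault class vanishes for \emph{every} Aeppli class; the same holds for $\partial\colon H^{p,q}_{A}(X)\to H^{p+1,q}_{\partial}(X)$. Consequently $\Phi=(\partial,\overline\partial)$ has zero image (so it produces no $h^{k+1}_{\overline\partial}$ term) and its kernel is all of $H^{p,q}_{A}(X)$, so the split ``$\dim\ker\Phi+\dim\imm\Phi$'' returns only the tautology $h^{p,q}_{A}\leq h^{p,q}_{A}$. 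In particular your description of $\ker\Phi$ as the classes admitting both a $\partial$-closed and a $\overline\partial$-closed cohomologous representative is false: since $\ker\Phi$ is everything, it would assert that $H^{p,q}_{\partial}(X)\to H^{p,q}_{A}(X)$ and $H^{p,q}_{\overline\partial}(X)\to H^{p,q}_{A}(X)$ are always surjective. On the Iwasawa manifold (structure equations $\partial\varphi^3=-\varphi^1\wedge\varphi^2$, $\overline\partial\varphi^3=0$) the class $0\neq[\varphi^3]\in H^{1,0}_{A}(X)$ has no $\partial$-closed representative: its representatives are $\varphi^3+\partial f$ with $f$ a function, and $\partial(\varphi^3+\partial f)=-\varphi^1\wedge\varphi^2\neq0$. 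The natural non-zero targets of $\partial$ and $\overline\partial$ on Aeppli classes are instead the \emph{Bott-Chern} groups in total degree $k+1$: the map $[\alpha]_{A}\mapsto[\overline\partial\alpha]_{BC}\in H^{p,q+1}_{BC}(X)$ is well defined (a change of representative alters $\overline\partial\alpha$ by $\overline\partial\partial u\in\imm\partial\overline\partial$), and its kernel is exactly the image of $H^{p,q}_{\overline\partial}(X)\to H^{p,q}_{A}(X)$; in the example above, $[\partial\varphi^3]_{BC}=-[\varphi^1\wedge\varphi^2]_{BC}\neq0$ detects the class that your $\Phi$ cannot see. But then the image lands in Bott-Chern cohomology, which is precisely the kind of quantity the theorem is trying to bound, so a single kernel--image estimate cannot close the argument; one is forced to iterate.

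That forced iteration is exactly the paper's proof, and it is also what your last two paragraphs gesture at. The paper fixes a nonzero Aeppli class, applies $\overline\partial$ (resp.\ $\partial$) to a representative, and asks whether the resulting $\partial$-closed form is $\partial$-exact: if not, it is a nonzero conjugate-Dolbeault class in total degree $k+1$; if so, its $\partial$-primitive is (after adjustment) a new Aeppli-type class one step further along the anti-diagonal, and the chase repeats, terminating after at most $\min\{k+1,(2n-k)+1\}$ steps --- this is where the constant comes from, with conjugation converting the conjugate-Dolbeault contributions into the numbers $h^{k}_{\overline\partial}$, $h^{k+1}_{\overline\partial}$. Your ``peeling off the anti-diagonal'' induction and your alternative via the squares-and-zigzags decomposition of the double complex both describe this mechanism correctly, and the latter can indeed be turned into a complete proof; but in your write-up this is precisely the part that is left unexecuted, while the part that is made concrete (the maps $\Phi$ and the kernel/image bookkeeping) carries no information. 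The proposal as structured therefore has a genuine gap, even though its closing heuristics point at the right argument.
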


\begin{rmk}\label{rmk:upper-bound}
 In fact, the constant $n+1$ in the statement can be replaced by $\min\{k+1, (2n-k)+1\}$: for any $k\in\Z$,
 \begin{eqnarray}\label{eq:upper-bound}
 \lefteqn{ \sum_{p+q=k} \dim_\C H^{p,q}_{A}(X) } \\[5pt]
 &\leq& \min\{k+1, (2n-k)+1\} \cdot \left( \sum_{p+q=k} \dim_\C H^{p,q}_{\overline\partial}(X) + \sum_{p+q=k+1} \dim_\C H^{p,q}_{\overline\partial}(X) \right) \;. \nonumber 
 \end{eqnarray}
 Moreover, by using the Schweitzer duality \cite[\S2.c]{schweitzer} and the Serre duality, we get an analogue inequality for the Bott-Chern cohomology: for any $k\in\Z$,
 \begin{eqnarray*}
 \lefteqn{ \sum_{p+q=k} \dim_\C H^{p,q}_{BC}(X) } \\[5pt]
 &\leq& \min\{k+1, (2n-k)+1\} \cdot \left( \sum_{p+q=k} \dim_\C H^{p,q}_{\overline\partial}(X) + \sum_{p+q=k-1} \dim_\C H^{p,q}_{\overline\partial}(X) \right) \\[5pt]
 &\leq& (n+1) \cdot \left( \sum_{p+q=k} \dim_\C H^{p,q}_{\overline\partial}(X) + \sum_{p+q=k-1} \dim_\C H^{p,q}_{\overline\partial}(X) \right) \;. \nonumber 
 \end{eqnarray*}
 \end{rmk}

\begin{proof}[Proof (of Theorem \ref{thm:upper-bound}).]
 Let $0\neq[\alpha^{p,q}]\in H^{p,q}_A(X)$ with $p+q=k$. In particular, $\partial\overline\partial\alpha^{p,q}=0$, and $\alpha^{p,q}$ is neither $\partial$-exact nor $\overline\partial$-exact (in fact, it does not belong to $\imm\partial+\imm\overline\partial$). We distinguish four possible cases.
 \begin{enumerate}
  \item {\itshape It holds $\partial\alpha^{p,q}=\overline\partial\alpha^{p,q}=0$.}
  Then, by setting $\alpha_1:=\alpha^{p,q}$ and $\alpha_2:=\alpha^{p,q}$, we get two non-trivial (conjugate-)Dolbeault classes $0\neq[\alpha_1]\in H^{p,q}_{\overline\partial}(X)$ and $0\neq[\alpha_2]\in H^{p,q}_{\partial}(X)$.
  \item {\itshape It holds $\partial\alpha^{p,q}=0$ and $\overline\partial\alpha^{p,q}\neq0$.} By setting $\alpha_2:=\alpha^{p,q}$, we get a non-trivial conjugate-Dolbeault class $0\neq[\alpha_2]\in H^{p,q}_{\partial}(X)$. Consider now $\gamma^{p,q+1}:=\overline\partial\alpha^{p,q}$. Notice that $\partial\gamma^{p,q+1}=\partial\overline\partial\alpha^{p,q}=0$. We distinguish two cases.
  \begin{enumerate}
   \item {\itshape The form $\gamma^{p,q+1}$ is not $\partial$-exact.} In this case, set $\alpha_1:=\gamma^{p,q+1}$, getting a non-trivial class in conjugate-Dolbeault cohomology $0\neq[\alpha_1]\in H^{p,q+1}_{\partial}(X)$.
   \item {\itshape The form $\gamma^{p,q+1}$ is $\partial$-exact.} Let $\alpha^{p-1,q+1}$ be such that $\gamma^{p,q+1}=\partial\alpha^{p-1,q+1}$. (See Figure \ref{fig:proof}.) Notice that, without loss of generality, we may assume $0\neq[\alpha^{p-1,q+1}]\in H^{p-1,q+1}_{A}(X)$. Indeed, first notice that $\partial\overline\partial\alpha^{p-1,q+1}=-\overline\partial\gamma^{p,q+1}=-\overline\partial\overline\partial\alpha^{p,q}=0$. Moreover, $\alpha^{p-1,q+1}$ is not $\partial$-exact, otherwise $\gamma^{p,q+1}=\partial\alpha^{p-1,q+1}$ is zero. Finally, if $\alpha^{p-1,q+1}$ is $\overline\partial$-exact, let us say $\alpha^{p-1,q+1}=\overline\partial\beta^{p-1,q}$, then, up to consider $[\alpha^{p,q}]=[\alpha^{p,q}+\partial\beta^{p-1,q}]\in H^{p,q}_{A}(X)$, we are in case (1), since $\partial(\alpha^{p,q}+\partial\beta^{p-1,q})=\overline\partial(\alpha^{p,q}+\partial\beta^{p-1,q})=0$.
   Moreover, if $\alpha^{p-1,q+1}=\partial\lambda^{p-2,q+1}+\overline\partial\mu^{p-1,q}$ then, up to consider $[\alpha^{p-1,q+1}]=[\alpha^{p-1,q+1}-\partial\lambda^{p-2,q+1}]\in H^{p-1,q+1}_{A}(X)$ we still have $\partial\left(\alpha^{p-1,q+1}-\partial\lambda^{p-2,q+1}\right)=\partial\alpha^{p-1,q+1}=\gamma^{p,q+1}$ but now $\alpha^{p-1,q+1}-\partial\lambda^{p-2,q+1}$ is $\overline\partial$-exact so we are in case (1) as noticed above.\\
   Now, we still distinguish two cases. If $\overline\partial\alpha^{p-1,q+1}=0$, then set $\alpha_1:=\alpha^{p-1,q+1}$ and consider $0\neq[\alpha_1]\in H^{p-1,q+1}_{\overline\partial}(X)$. Otherwise, let $\gamma^{p-1,q+2}:=\overline\partial\alpha^{p-1,q+1}$, and go on again with the two cases (2.a) and (2.b).
      \begin{figure}
      \label{fig:proof}
      \begin{center}
      \begin{tikzpicture}
      \newcommand\un{1.5}

      \draw[help lines, step=\un] (0,0) grid (4*\un,4*\un);

      \node at (\un*.5+\un*1,-.5) {$p-1$};
      \node at (\un*.5+\un*2,-.5) {$p$};
      \node at (-.5,\un*.5+\un*1) {$q$};
      \node at (-.5,\un*.5+\un*2) {$q+1$};

      \coordinate (A) at (2*\un+1/2*\un, 1*\un+1/2*\un);
      \coordinate (B) at (2*\un+1/2*\un, 2*\un+1/2*\un);
      \coordinate (C) at (1*\un+1/2*\un, 2*\un+1/2*\un);

      \newcommand{\raggio}{1*\un pt}
      \fill (A) circle (\raggio);
      \fill (B) circle (\raggio);
      \fill (C) circle (\raggio);

      \draw (A) -- (B);
      \draw (C) -- (B);

      \begingroup\makeatletter\def\f@size{6}\check@mathfonts
      \node at (2*\un+1/2*\un, 1*\un+1/2*\un-.3) {$\alpha^{p,q}$};
      \node at (2*\un+1/2*\un, 2*\un+1/2*\un+.3) {$\gamma^{p,q+1}$};
      \node at (1*\un+1/2*\un, 2*\un+1/2*\un+.3) {$\alpha^{p-1,q+1}$};
      \endgroup

      \end{tikzpicture}
      \end{center}
      \caption{Case 2.b in the Proof of Theorem \ref{thm:upper-bound}.}
      \end{figure}
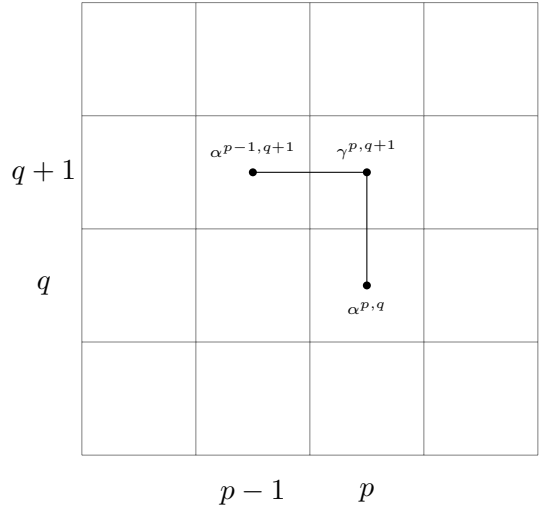
   \end{enumerate}
   Note that this process allows to find a non-trivial class, either in Dolbeault cohomology at total degree $k$ or in conjugate-Dolbeault cohomology at total degree $k+1$, in a finite number of steps. This number is at most the length of the anti-diagonal zigzag at degree $k$, namely, $\min\{k+1,(2n-k)+1\}$.
  \item {\itshape There holds $\partial\alpha^{p,q}\neq0$ and $\overline\partial\alpha^{p,q}=0$.} This case is analogous to the case (2), by symmetry.
  \item {\itshape There holds $\partial\alpha^{p,q}\neq0$ and $\overline\partial\alpha^{p,q}\neq0$.} Set $\gamma^{p,q+1}:=\overline\partial\alpha^{p,q}$, and proceed as in (2), distinguishing between cases (2.a) and (2.b) to get a non-trivial class either in Dolbeault cohomology at total degree $k$ or in conjugate-Dolbeault cohomology at total degree $k+1$. Consider then $\gamma^{p+1,q}:=\partial\alpha^{p,q}$, and proceed as in (3), by symmetry, distinguishing between two cases to get a non-trivial class either in conjugate-Dolbeault cohomology at total degree $k$ or in Dolbeault cohomology at total degree $k+1$.
 \end{enumerate}
 Summarizing, we have that any of the classes given by $[\alpha^{p+j,q-j}]\in H^{p+j,q-j}_{A}(X)$, which are at most $\min\{k+1, (2n-k)+1\}$ many, yields two non-trivial classes in either Dolbeault or conjugate-Dolbeault, either at degree $k$ or at degree $k+1$.
 
 In other words, consider the space of zigzags, namely,
 $$ \mathcal{Z} \;:=\; \left\{
 \left\{ \left[\alpha^{p+j+\delta,q-j}\right]
 \right\}_{j\in\Z,\delta\in\{0,1\}}
 \in
 \bigoplus_{j\in\Z} \left( H^{p+j,q-j}_{A}(X) \oplus H^{p+j+1,q-j}_{BC}(X) \right)
 \right\} \;, $$
 where $\alpha^{p+j,q-j}$ and $\alpha^{p+j+1,q-j}:=\gamma^{p+j+1,q-j}$ are constructed as above and extended with $0$.
 Note that we can choose a basis $\mathcal{B}$ for this linear space with the property that its elements have linear-independent ends, (that is, the first and last non-zero classes, $[\alpha_1]$ and $[\alpha_2]$ respectively,) in the sum of Dolbeault and conjugate-Dolbeault cohomologies.
 
 At the end, we get
 \begin{eqnarray*}
  h^k_{A} &\leq& \min\{k+1, (2n-k)+1\} \cdot \left( h^k_{\overline\partial} + h^{k+1}_{\overline\partial} \right) \\[5pt]
  &\leq& (n+1) \cdot \left( h^k_{\overline\partial} + h^{k+1}_{\overline\partial} \right) \;,
 \end{eqnarray*}
 proving the statement.
\end{proof}

\section{A quantitative characterization of \texorpdfstring{$\partial\overline\partial$}{partiaoverlinepartial}-Lemma}

As a consequence of Theorem \ref{thm:upper-bound} and by \cite[Theorem A]{angella-tomassini-3}, we get the following bounds: for any $k\in\Z$,
\begin{eqnarray*}
2 \, h^k_{\overline\partial} \;\;\leq &h^k_{A}+h^{k}_{BC}& \leq\;\; (n+1) \cdot \left( h^{k-1}_{\overline\partial} + 2\, h^k_{\overline\partial} + h^{k+1}_{\overline\partial} \right) \;, \\[5pt]
-2(n+1) \cdot \left( h^{k-1}_{\overline\partial} + h^{k}_{\overline\partial} \right) \;\;\leq &h^k_{A} - h^{k}_{BC}& \leq\;\; 2(n+1) \cdot \left( h^k_{\overline\partial} + h^{k+1}_{\overline\partial} \right) \;.
\end{eqnarray*}

By \cite[Theorem B]{angella-tomassini-3}, the quantity $h^{k}_{BC}+h^{k}_{A}$ yields a characterization of the $\partial\overline\partial$-Lemma. In this section, we provide a characterization of the $\partial\overline\partial$-Lemma in terms of the quantity $h^{k}_{BC}-h^{k}_{A}$.

\medskip

We prove the following result.

\begin{thm}\label{thm:char-deldelbar-minus}
 A compact complex manifold $X$ satisfies the $\partial\overline\partial$-Lemma if and only if, for any $k\in\Z$, there holds
 $$ \sum_{p+q=k} \left( \dim_\C H^{p,q}_{BC}(X) - \dim_\C H^{p,q}_{A}(X) \right) \;=\; 0 \;. $$
\end{thm}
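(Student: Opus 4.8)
The plan is to prove the two implications separately; the forward one is immediate and the converse rests on a bookkeeping of the indecomposable pieces of the double complex of forms. Throughout set $D^k:=h^k_{BC}-h^k_A$. For the direction ``$\partial\overline\partial$-Lemma $\Rightarrow$ vanishing'': if $X$ satisfies the $\partial\overline\partial$-Lemma, then by the Preliminaries the natural map $H^{\bullet,\bullet}_{BC}(X)\to H^{\bullet,\bullet}_A(X)$ is an isomorphism of bigraded vector spaces, so $\dim_\C H^{p,q}_{BC}(X)=\dim_\C H^{p,q}_A(X)$ for every $(p,q)$ and a fortiori $D^k=0$ for every $k$.

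For the converse I assume $D^k=0$ for all $k$ and aim to show that $H_{BC}\to H_A$ is injective. I would decompose the bounded double complex $\bigl(A^{\bullet,\bullet}(X),\partial,\overline\partial\bigr)$ into indecomposables: each is either a \emph{square} (acyclic, hence invisible to every cohomology) or a \emph{zigzag} of one-dimensional pieces. Since $\partial^2=\overline\partial^2=0$ and no square occurs inside a zigzag, consecutive arrows alternate in type and there are no interior ``pass-through'' vertices; thus in a zigzag of length $\geq 2$ every vertex is either a \emph{source} (both arrows outgoing) or a \emph{sink} (both arrows incoming), and these alternate. The structural observation I would isolate is that on such a zigzag $Z$ one has $\imm\partial+\imm\overline\partial=\langle\text{sinks}\rangle=\ker\partial\cap\ker\overline\partial$ and $\imm\partial\overline\partial=0$, so $H_A(Z)=\langle\text{sources}\rangle$ and $H_{BC}(Z)=\langle\text{sinks}\rangle$. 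As each arrow raises the total degree by one and $Z$ is connected, all sources lie in a single degree $k_0(Z)$ and all sinks in degree $k_0(Z)+1$. Writing $a(Z)\geq 1$, $b(Z)\geq 1$ for the numbers of sources and sinks, $Z$ contributes $+a(Z)$ to $h^{k_0(Z)}_A$ and $+b(Z)$ to $h^{k_0(Z)+1}_{BC}$ and nothing else, while dots (length-$1$ zigzags) contribute equally to $h^k_{BC}$ and $h^k_A$ in their own degree. Note also that there are only finitely many zigzags, since each contributes at least $1$ to the finite-dimensional $H_{BC}(X)$. Collecting terms gives $D^k=B_{k-1}-A_k$, where $A_k$ (resp. $B_k$) is the total number of sources (resp. sinks) among the length-$\geq2$ zigzags whose source-degree equals $k$; in particular $A_k>0\iff B_k>0\iff$ there is a length-$\geq2$ zigzag with source-degree $k$.

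The argument then finishes by a maximal-degree observation. The hypothesis $D^k\equiv 0$ reads $A_k=B_{k-1}$ for all $k$. If some length-$\geq2$ zigzag existed, let $k^\ast$ be the largest source-degree that occurs; then $A_{k^\ast+1}=0$, whereas the zigzag(s) with source-degree $k^\ast$ force $B_{k^\ast}\geq 1$, so $0=D^{k^\ast+1}=B_{k^\ast}-A_{k^\ast+1}=B_{k^\ast}\geq1$, a contradiction. Hence no zigzag of length $\geq2$ occurs. Since $\ker\bigl(H_{BC}\to H_A\bigr)$ decomposes over the indecomposables and is exactly the span of the sinks of the length-$\geq2$ zigzags (sinks are $\partial\overline\partial$-closed but lie in $\imm\partial+\imm\overline\partial$, so map to $0$ in $H_A$), this kernel is trivial, the map is injective, and $X$ satisfies the $\partial\overline\partial$-Lemma.

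The main obstacle is entirely the structural input of the second paragraph: justifying the decomposition into squares and zigzags and pinning down that each zigzag places its Aeppli classes (one per source) in a single total degree and its Bott-Chern classes (one per sink) in the next. This can be carried out either by citing the structure theorem for bounded double complexes and computing $H_{BC},H_A$ on each indecomposable type, or, closer to the present paper, by reorganizing the zigzag-construction already performed in the proof of Theorem~\ref{thm:upper-bound} (see Figure~\ref{fig:proof}), which produces exactly these alternating Aeppli/Bott-Chern strings. Once this bookkeeping is established the maximal-degree step is purely formal, and the use of Schweitzer duality only enters if one wishes to rewrite the hypothesis purely in terms of Bott-Chern numbers.
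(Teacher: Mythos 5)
Your proof is correct, and it takes a genuinely different route from the paper's. The paper never decomposes the complex of forms: it works with the Varouchas exact sequences \cite{varouchas}, which relate $H^{\bullet,\bullet}_{\overline\partial}(X)$, $H^{\bullet,\bullet}_{BC}(X)$, $H^{\bullet,\bullet}_{A}(X)$ to six auxiliary groups $A^{\bullet,\bullet},\dots,F^{\bullet,\bullet}$; it derives the identity $h^k_{BC}-h^k_{A}=2b^k-2b^{k+1}+f^k-a^k$, and then an induction on $k$ (starting from $b^0=0$, and using the inequality $f^\ell\leq b^{\ell+1}$ coming from exactness) shows that the hypothesis forces $a^k=b^k=c^k=0$, so that $H^{\bullet,\bullet}_{\overline\partial}(X)\to H^{\bullet,\bullet}_{A}(X)$ is an isomorphism and \cite[Lemma 5.15]{deligne-griffiths-morgan-sullivan} applies. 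You instead invoke the structure theorem for bounded double complexes, compute both cohomologies on squares, dots and zigzags, obtain $D^k=B_{k-1}-A_k$, and eliminate all zigzags of length at least $2$ by the maximal-source-degree contradiction. In substance the two arguments perform the same count (the Varouchas groups are exactly tallies of zigzag sources and sinks), but the packaging buys different things: the paper's version is elementary and self-contained, resting only on classical exact sequences; yours is more conceptual, since it makes visible why the numerical hypothesis forces the Lemma (a zigzag of length at least $2$ places its Bott-Chern classes one total degree above its Aeppli classes, so a topmost such zigzag can never be compensated), it identifies $\ker\left(H^{\bullet,\bullet}_{BC}(X)\to H^{\bullet,\bullet}_{A}(X)\right)$ precisely as the span of the zigzag sinks, and it generalizes verbatim to any bounded double complex with finite-dimensional Bott-Chern cohomology, in the spirit of the paper's last section. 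Two caveats. First, the structure theorem must be cited in a version valid for infinite-dimensional (bounded) double complexes, since the complex of forms is not finite-dimensional; such a version exists (Stelzig's structure theorem for double complexes, which postdates this paper), and of your two proposed justifications only this citation is solid --- the representative-chasing in the proof of Theorem \ref{thm:upper-bound} does not by itself produce a direct-sum decomposition of the complex. Second, your finiteness remark (finitely many zigzags, since each contributes at least $1$ to the finite-dimensional Bott-Chern cohomology) is exactly what guarantees that a maximal source degree exists; it is essential to the argument and should stay.
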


\begin{proof}
 The ``only if'' part being trivial, we need just to prove that the numerical equality $h^k_{BC}=h^k_{A}$ for any $k\in\Z$ yields the $\partial\overline\partial$-Lemma.
 
 Consider the J. Varouchas \cite{varouchas} exact sequences. We recall the following definitions:
 \begin{multline*}
 A^{\bullet,\bullet} \;:=\; \frac{\imm\overline\partial\cap\imm\partial}{\imm\partial\overline\partial} \;, \qquad
 B^{\bullet,\bullet} \;:=\; \frac{\ker\overline\partial\cap\imm\partial}{\imm\partial\overline\partial} \;, \qquad
 C^{\bullet,\bullet} \;:=\; \frac{\ker\partial\overline\partial}{\ker\overline\partial+\imm\partial} \;,
 \\[5pt]
 D^{\bullet,\bullet} \;:=\; \frac{\imm\overline\partial\cap\ker\partial}{\imm\partial\overline\partial} \;, \qquad
 E^{\bullet,\bullet} \;:=\; \frac{\ker\partial\overline\partial}{\ker\partial+\imm\overline\partial} \;, \qquad
 F^{\bullet,\bullet} \;:=\; \frac{\ker\partial\overline\partial}{\ker\overline\partial+\ker\partial} \;.
 \end{multline*}
 Then, by \cite[Section 3.1]{varouchas}, one has that the following sequences of finite-dimensional vector spaces are exact:
 \begin{multline*}
 0 \to A^{\bullet,\bullet} \to B^{\bullet,\bullet} \to H^{\bullet,\bullet}_{\overline\partial}(X) \to H^{\bullet,\bullet}_{A}(X) \to C^{\bullet,\bullet} \to 0 \;, \\[5pt]
 0 \to D^{\bullet,\bullet} \to H^{\bullet,\bullet}_{BC}(X) \to H^{\bullet,\bullet}_{\overline\partial}(X) \to E^{\bullet,\bullet} \to F^{\bullet,\bullet} \to 0 \;.
 \end{multline*}

 We denote by small romanic letters, $a^{p,q}:=\dim_\C A^{p,q}$, \dots, $f^{p,q}:=\dim_\C F^{p,q}$, the dimensions of the corresponding groups. Moreover, we denote $a^k:=\sum_{p+q=k}a^{p,q}$, \dots, $f^k:=\sum_{p+q=k}f^{p,q}$. By conjugation and by the isomorphisms $\partial\colon E^{\bullet,\bullet} \stackrel{\simeq}{\to} B^{\bullet+1,\bullet}$ and $\overline\partial\colon C^{\bullet,\bullet} \stackrel{\simeq}{\to} D^{\bullet,\bullet+1}$, we have, in particular,
 $$ d^k \;=\; b^k \;, \qquad e^k \;=\; c^k \;, \qquad c^k \;=\; b^{k+1} \;. $$

 From the exact sequences above, for any $k\in\Z$, we have
 \begin{eqnarray*}
  h^{k}_{BC} - h^{k}_{A}
  &=& d^k-e^k+f^k-c^k+b^k-a^k \\[5pt]
  &=& 2 b^k - 2 c^k + f^k - a^k \\[5pt]
  &=& 2 b^k - 2 b^{k+1} + f^k - a^k \;.
 \end{eqnarray*}
 
 By the hypothesis that the left-hand side is zero, we get
 $$ 2 b^{k+1} \;=\; 2 b^k + f^k - a^k \;, $$
 whence (note that $b^0=0$)
 $$ b^{k+1} \;=\; \frac{1}{2} \sum_{\ell=0}^{k} \left( f^\ell - a^\ell \right) \;. $$
 
 We argue now by induction on $k\in\N$ to prove that $a^k=0$ and $b^k=0$. This is enough to prove that $X$ satisfies the $\partial\overline\partial$-Lemma: indeed, also $c^k=b^{k+1}=0$ for any $k\in\Z$; whence we get that $H^{\bullet,\bullet}_{\overline\partial}(X)\to H^{\bullet,\bullet}_{A}(X)$ is an isomorphism; and therefore, by \cite[Lemma 5.15]{deligne-griffiths-morgan-sullivan}, we get the thesis. For $k=0$, we have $0\leq a^1\leq b^1\leq \frac{1}{2} \left( f^0-a^0 \right)=0$. We suppose by induction that $a^\ell=0=b^\ell$ for any $\ell\in\{0,\dots,k\}$. By the exactness of $E^{\bullet,\bullet}\to F^{\bullet,\bullet}\to 0$, we have $f^\ell\leq e^\ell=c^\ell=b^{\ell+1}$ for any $\ell\in\Z$. In particular, also $f^\ell=0$ for any $\ell\in\{0, \ldots, k-1\}$. We then have $0\leq a^{k+1}\leq b^{k+1}=\frac{1}{2}\sum_{\ell=0}^{k}\left( f^\ell-a^\ell \right)=\frac{1}{2} f^k\leq \frac{1}{2} b^{k+1}$ whence also $a^{k+1}=b^{k+1}=0$, proving the claim.
\end{proof}

\section{Examples}

In Table \ref{table:examples-degrees}, we summarize, for $k\in\Z$, the quantities
$$ S^k \;:=\; \min \{ k+1, (2n-k)+1 \} \cdot \left( h^k_{\overline\partial} + h^{k+1}_{\overline\partial} \right) - h^k_A \;\in\; \N $$
and
$$ N^k \;:=\; h^k_{A} - h^k_{BC} \;\in\; \Z $$
on some explicit examples.
We also report the quantity
$$ \Delta^k \;:=\; h^k_A+h^k_{BC}-2b_k \;\in\; \N \;. $$
More precisely, we consider the Iwasawa manifold and its small deformations, for which the Bott-Chern cohomology is computed in \cite{schweitzer, angella-1}, some compact complex surfaces using the results in \cite{angella-dloussky-tomassini}, and the Nakamura manifolds of completely-solvable, respectively holomorphically-parallelizable type, and some small deformations of it, as studied in \cite{angella-kasuya-1}. In fact, for suitable compact quotients of nilpotent or solvable Lie groups, the Bott-Chern cohomology can be computed by restricting to a finite-dimensional sub-complex of the complex of forms. Similar results can be obtained for small deformations.

\begin{center}
\begin{table}[ht]
 \centering
\resizebox{\textwidth}{!}{
 \begin{tabular}{>{\bfseries\bgroup}l<{\bfseries\egroup} || >{$}c<{$} >{$}c<{$} >{$}c<{$} || >{$}c<{$} >{$}c<{$} >{$}c<{$} || >{$}c<{$} >{$}c<{$} >{$}c<{$} || >{$}c<{$} >{$}c<{$} >{$}c<{$} || >{$}c<{$} >{$}c<{$} >{$}c<{$} ||}
\toprule
manifold &
S^1 & N^1 & \Delta^1 & 
S^2 & N^2 & \Delta^2 & 
S^3 & N^3 & \Delta^3 & 
S^4 & N^4 & \Delta^4 & 
S^5 & N^5 & \Delta^5
\\
\toprule
Inoue $S_M$ &
0 & 2 & 0 & 
2 & 0 & 2 & 
4 & -2 & 0 & 
- & - & - & 
- & - & -
\\
primary Kodaira &
10 & 2 & 0 & 
16 & 0 & 2 & 
6 & -2 & 0 & 
- & - & - & 
- & - & -
\\
secondary Kodaira &
0 & 2 & 0 & 
2 & 0 & 2 & 
4 & -2 & 0 & 
- & - & - & 
- & - & -
\\
Inoue $S^\pm$ &
0 & 2 & 0 & 
2 & 0 & 2 & 
4 & -2 & 0 & 
- & - & - & 
- & - & -
\\
Calabi-Eckmann $\mathbb{S}^1\times\mathbb{S}^3$ &
0 & 2 & 0 & 
2 & 0 & 2 & 
4 & -2 & 0 & 
- & - & - & 
- & - & -
\\
\midrule
Iwasawa and deformations (i) &
26 & 2 & 2 & 
63 & 2 & 6 & 
86 & 0 & 8 & 
38 & -2 & 6 & 
8 & -2 & 2
\\
Iwasawa deformations (ii.a) &
20 & 2 & 2 & 
52 & 3 & 3 & 
70 & 0 & 8 & 
31 & -3 & 3 & 
6 & -2 & 2
\\
Iwasawa deformations (ii.b) &
20 & 2 & 2 & 
53 & 2 & 2 & 
70 & 0 & 8 & 
31 & -2 & 2 & 
6 & -2 & 2
\\
Iwasawa deformations (iii.a) &
18 & 2 & 2 & 
43 & 5 & 1 & 
58 & 0 & 8 & 
30 & -5 & 1 & 
6 & -2 & 2
\\
Iwasawa deformations (iii.b) &
18 & 2 & 2 & 
44 & 4 & 0 & 
58 & 0 & 8 & 
30 & -4 & 0 & 
6 & -2 & 2
\\
\midrule
Nakamura completely-solvable (i) &
32 & 8 & 8 & 
88 & 4 & 20 & 
120 & 0 & 24 & 
50 & -4 & 20 & 
12 & -8 & 8
\\
Nakamura completely-solvable (ii) &
16 & 0 & 0 & 
48 & 4 & 4 & 
64 & 0 & 8 & 
22 & -4 & 4 & 
4 & 0 & 0
\\
Nakamura completely-solvable (iii) &
12 & 0 & 0 & 
34 & 0 & 0 & 
44 & 0 & 0 & 
16 & 0 & 0 & 
4 & 0 & 0
\\
Nakamura holomorphically-parallelizable (a) &
32 & 8 & 8 & 
88 & 4 & 20 & 
120 & 0 & 24 & 
50 & -4 & 20 & 
12 & -8 & 8
\\
Nakamura holomorphically-parallelizable (b) &
16 & 4 & 4 & 
38 & 0 & 8 & 
52 & 0 & 8 & 
26 & 0 & 8 & 
8 & -4 & 4
\\
\bottomrule
\end{tabular}
}
\caption{The degrees $S^k$, $N^k$, and $\Delta^k$ on some non-K\"ahler examples.}
\label{table:examples-degrees}
\end{table}
\end{center}

\begin{rmk}
In particular, notice that the value of $S^k$ may be zero: so the inequality in \eqref{eq:upper-bound} is sharp.
\end{rmk}

\section{A qualitative characterization of \texorpdfstring{$\partial\overline\partial$}{partialoverlinepartial}-Lemma}

In this section, we are ultimately aimed at understanding the algebraic structure induced on the Bott-Chern cohomology by the structure of the space of differential forms. Initially motivated by understanding whether it is possible to mimic the Sullivan theory of formality \cite{sullivan} in the context of Bott-Chern cohomology, (see some first attempts in \cite{angella-tomassini-6, tardini-tomassini},) we introduce and study a ``qualitative property'', which is proven to characterize the $\partial\overline\partial$-Lemma. This is a consequence of the results in the previous sections.

\medskip

Note that the triple Aeppli-Bott-Chern Massey products introduced in \cite{angella-tomassini-6} take values in the Aeppli cohomology groups starting from Bott-Chern classes. Recall that, in the Sullivan theory of formality for the de Rham cohomology \cite{sullivan, deligne-griffiths-morgan-sullivan}, (compare also the Neisendorfer and Taylor theory of Dolbeault-formality for the Dolbeault cohomology \cite{neisendorfer-taylor},) the Massey products are related with the $A_\infty$-algebra structure induced on any deformation retract by the Homotopy Transfer Principle, \cite{lu-palmieri-wu-zhang}. In view of understanding a possible analogue notion of $A_\infty$-algebra in the context of the Bott-Chern cohomology, one would first attempt to construct higher-order Massey products. When using the definition in \cite{angella-tomassini-6}, this requires to look at Aeppli classes as Bott-Chern classes.
This suggests us to investigate compact complex manifolds satisfying the following ``qualitative'' property.

\begin{defi}
 A compact complex manifold $X$ is said to satisfy the {\em qualitative Kodaira-Spencer-Schweitzer property} if the natural pairing
 $$ H^{\bullet,\bullet}_{BC}(X) \times H^{\bullet,\bullet}_{BC}(X) \to \C \;, \qquad \left( [\alpha], [\beta] \right) \mapsto \int_X \alpha\wedge\beta $$
 induced by the wedge product and by the pairing with the fundamental class of $X$ is non-degenerate.
\end{defi}

We have in fact that this property characterizes the $\partial\overline\partial$-Lemma.

\begin{thm}\label{thm:main-thm}
 Let $X$ be a compact complex manifold. Then $X$ satisfies the qualitative Kodaira-Spencer-Schweitzer property if and only if $X$ satisfies the $\partial\overline\partial$-Lemma.
\end{thm}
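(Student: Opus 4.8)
The plan is to derive both implications from two facts already at our disposal: the Schweitzer duality \cite[\S2.c]{schweitzer}, which provides a \emph{natural} non-degenerate pairing $H^{p,q}_{BC}(X) \times H^{n-p,n-q}_{A}(X) \to \C$ given by $([\alpha],[\beta]) \mapsto \int_X \alpha\wedge\beta$, and the quantitative characterization in Theorem \ref{thm:char-deldelbar-minus}. First I would record two structural remarks about the pairing in the Definition. Since $\int_X\alpha\wedge\beta$ vanishes unless $\alpha\wedge\beta$ has bidegree $(n,n)$, the pairing decomposes into independent blocks with respect to the bigrading, coupling $H^{p,q}_{BC}(X)$ only with $H^{n-p,n-q}_{BC}(X)$; hence the qualitative Kodaira-Spencer-Schweitzer property is equivalent to the non-degeneracy of each block pairing $H^{p,q}_{BC}(X) \times H^{n-p,n-q}_{BC}(X) \to \C$ separately. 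Moreover, the integral descends to Bott-Chern classes: replacing a $d$-closed representative by a cohomologous one changes the integrand by a $d$-exact top-degree form, as $\partial\overline\partial\eta\wedge\beta = d(\overline\partial\eta\wedge\beta)$ when $d\beta=0$, so its integral vanishes by Stokes. Both checks are routine.

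For the implication $\partial\overline\partial$-Lemma $\Rightarrow$ qualitative property, I would use that, under the $\partial\overline\partial$-Lemma, the natural map $\iota\colon H^{n-p,n-q}_{BC}(X) \to H^{n-p,n-q}_{A}(X)$ is an isomorphism (see \cite[Lemma 5.15]{deligne-griffiths-morgan-sullivan} and the preliminaries). Since $\iota$ is induced by the identity on forms, for $d$-closed representatives the Bott-Chern pairing and the Schweitzer pairing are computed by the \emph{same} integral; thus the block pairing $H^{p,q}_{BC}(X) \times H^{n-p,n-q}_{BC}(X) \to \C$ factors as the Schweitzer pairing $H^{p,q}_{BC}(X) \times H^{n-p,n-q}_{A}(X) \to \C$ precomposed with $\iota$ in the second argument. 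A non-degenerate pairing composed with an isomorphism in one slot stays non-degenerate, so each block pairing is non-degenerate and the qualitative property follows.

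For the converse I would argue numerically. Non-degeneracy of the block pairing forces $\dim_\C H^{p,q}_{BC}(X) = \dim_\C H^{n-p,n-q}_{BC}(X)$, and summing over $p+q=k$ gives $h^k_{BC} = h^{2n-k}_{BC}$ for every $k\in\Z$. On the other hand, Schweitzer duality gives $\dim_\C H^{p,q}_{BC}(X) = \dim_\C H^{n-p,n-q}_{A}(X)$, and summing yields $h^k_{BC} = h^{2n-k}_{A}$. Equating the two identities produces $h^{2n-k}_{BC} = h^{2n-k}_{A}$ for all $k$, that is $h^\ell_{BC} = h^\ell_{A}$ for all $\ell\in\Z$, equivalently $\sum_{p+q=\ell}\left(\dim_\C H^{p,q}_{BC}(X) - \dim_\C H^{p,q}_{A}(X)\right) = 0$. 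By Theorem \ref{thm:char-deldelbar-minus} this is exactly the $\partial\overline\partial$-Lemma.

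I expect the only genuinely substantive point to be the compatibility underlying the reverse implication: that the natural map $H^{\bullet,\bullet}_{BC}(X)\to H^{\bullet,\bullet}_{A}(X)$ intertwines the Bott-Chern pairing with the canonical Schweitzer $BC$-$A$ pairing. This is precisely what transports non-degeneracy from the natural $BC$-$A$ duality to the \emph{a priori} un-natural $BC$-$BC$ pairing, and it is the step where the $\partial\overline\partial$-Lemma, through the isomorphism $H^{\bullet,\bullet}_{BC}(X)\cong H^{\bullet,\bullet}_{A}(X)$, is actually used. Everything else---the block decomposition of the pairing, the Stokes computation of well-definedness, and the bidegree bookkeeping in Schweitzer duality---is formal, consistently with the statement being a straightforward consequence of Theorem \ref{thm:char-deldelbar-minus}.
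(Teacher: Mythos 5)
Your proposal is correct, and it splits naturally into a half that coincides with the paper and a half that does not. The direction ``qualitative property $\Rightarrow$ $\partial\overline\partial$-Lemma'' is essentially the paper's own argument: there, too, the non-degenerate $BC$--$BC$ pairing identifies Bott-Chern cohomology with its dual, Schweitzer duality identifies that dual with Aeppli cohomology, and the resulting equality $h^k_{BC}=h^k_A$ is fed into Theorem \ref{thm:char-deldelbar-minus}; you merely spell out the bidegree-block bookkeeping that the paper leaves implicit. The converse direction is where you genuinely diverge. The paper's proof is Hodge-theoretic: it fixes a Hermitian metric, takes the $\tilde\Delta_{BC}$-harmonic representative $\alpha_{h_{BC}}$ of a given class, notes that $\overline{*}\alpha_{h_{BC}}$ represents an Aeppli class, uses the surjectivity of the natural map $H^{\bullet,\bullet}_{BC}(X)\to H^{\bullet,\bullet}_{A}(X)$ (granted by the $\partial\overline\partial$-Lemma) to correct it by $\partial\gamma+\overline\partial\eta$ into a $d$-closed form, and then pairs to get $\int_X\alpha_{h_{BC}}\wedge\overline{*}\alpha_{h_{BC}}=\|\alpha_{h_{BC}}\|_{L^2}^2\neq 0$. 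You instead take Schweitzer duality as a black box --- a natural non-degenerate pairing $H^{p,q}_{BC}(X)\times H^{n-p,n-q}_{A}(X)\to\C$ realized by the same integral --- and observe that the $BC$--$BC$ pairing is exactly this pairing precomposed in one slot with the natural map $\iota\colon H^{\bullet,\bullet}_{BC}(X)\to H^{\bullet,\bullet}_{A}(X)$, so non-degeneracy transports across the isomorphism $\iota$ provided by the $\partial\overline\partial$-Lemma. The two arguments rest on the same underlying fact (under surjectivity of $\iota$, every Aeppli class has a $d$-closed representative), but yours is metric-free and purely formal, at the price of citing the non-degeneracy of Schweitzer's pairing --- which is itself established by precisely the harmonic-representative computation the paper carries out explicitly. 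So the paper's version is more self-contained, while yours isolates the functorial content more cleanly: the natural map $\iota$ intertwines the two pairings, and that is the only place the $\partial\overline\partial$-Lemma enters.
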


\begin{proof}
 For the ``only if'' part: by the hypothesis, we get that the Bott-Chern cohomology is isomorphic to its dual, which is in turn (non-naturally) isomorphic to the Aeppli cohomology by \cite{schweitzer}. Therefore the statement follows from Theorem \ref{thm:char-deldelbar-minus}.
 
 For the ``if'' part: fix a Hermitian metric $g$ on $X$. Consider $*$ the $\C$-linear Hodge-star-operator associated to $g$, and let $\tilde\Delta_{BC}$ and $\tilde\Delta_{A}$ denote the Bott-Chern, respectively Aeppli Laplacians \cite{kodaira-spencer-3, schweitzer} with respect to $g$.
 Let $[\alpha_{h_{BC}}]_{BC}$ be a Bott-Chern cohomology class, where $\alpha_{h_{BC}}$ is the $\tilde\Delta_{BC}$-harmonic representative. Consider the Aeppli cohomology class $[\overline{*}\alpha_{h_{BC}}]_{A}$. By the $\partial\overline\partial$-Lemma, the natural map $H^{\bullet,\bullet}_{BC}(X) \to H^{\bullet,\bullet}_{A}(X)$ is surjective \cite[Lemma 5.15]{deligne-griffiths-morgan-sullivan}, whence there exist $\gamma$ and $\eta$ forms such that $\overline{*}\alpha_{h_{BC}}+\partial\gamma+\overline\partial\eta\in\ker\partial\cap\ker\overline\partial$. Therefore we have the Bott-Chern cohomology class $[\overline{*}\alpha_{h_{BC}}+\partial\gamma+\overline\partial\eta]_{BC}$. We have
 \begin{eqnarray*}
  \left( [\alpha_{h_{BC}}]_{BC},\, [\overline{*}\alpha_{h_{BC}}+\partial\gamma+\overline\partial\eta]_{BC} \right)
  &=& \int_X \alpha_{h_{BC}} \wedge \left( \overline{*}\alpha_{h_{BC}}+\partial\gamma+\overline\partial\eta \right) \\[5pt]
  &=& \int_X \alpha_{h_{BC}} \wedge \overline{*}\alpha_{h_{BC}} \;,
 \end{eqnarray*}
 proving the statement.
\end{proof}

\begin{rmk}
 For a non-K\"ahler example of compact complex manifolds satisfying the Kodaira-Spencer-Schweitzer qualitative property, consider the completely-solvable Nakamura manifold in case {\itshape (iii)} in \cite[Example 2.17]{angella-kasuya-1}, see \cite[Table 3]{angella-kasuya-1}.
\end{rmk}

\section{An algebraic generalization with application to compact symplectic manifolds}

\subsection{Algebraic upper bound}
We note that the upper bound in Theorem \ref{thm:upper-bound} can be slightly generalized to a more general context, similar as done in \cite{angella-tomassini-5} for the inequality {\itshape \`a la} Fr\"olicher.

\begin{thm}\label{thm:upper-bound-algebraic}
 Let $\left(B^{\bullet,\bullet}, \partial, \overline\partial\right)$ be a (possibly unbounded) double complex of $\mathbb{K}$-vector spaces.
 Suppose that there exists $N\in\N$, there exists $\ell\in\{0,\ldots,N\}$, such that for any $q\in\Z$, there holds $B^{p,q}=\{0\}$ for $p\not\in\{\ell\cdot q, \ldots, \ell\cdot q+N\}$.
 Then, for any $k\in\Z$,
 \begin{eqnarray}\label{eq:upper-bound-aeppli-algebraic}
 \lefteqn{ \sum_{p+q=k} \dim_\mathbb{K} H^{p,q}_{A}(B^{\bullet,\bullet}) } \\[5pt]
 &\leq& (N+1) \cdot \left( \sum_{p+q=k} \left( \dim_\mathbb{K} H^{p,q}_{\partial}(B^{\bullet,\bullet}) + \dim_\mathbb{K} H^{p,q}_{\overline\partial}(B^{\bullet,\bullet}) \right) \right. \nonumber \\[5pt]
 && \left. + \sum_{p+q=k+1} \left( \dim_\mathbb{K} H^{p,q}_{\partial} (B^{\bullet,\bullet}) + \dim_\mathbb{K} H^{p,q}_{\overline\partial}(B^{\bullet,\bullet}) \right)\right) \;, \nonumber 
 \end{eqnarray}
 and
 \begin{eqnarray}\label{eq:upper-bound-bottchern-algebraic}
  \lefteqn{ \sum_{p+q=k} \dim_\mathbb{K} H^{p,q}_{BC}(B^{\bullet,\bullet}) } \\[5pt]
 &\leq& (N+1) \cdot \left( \sum_{p+q=k} \left( \dim_\mathbb{K} H^{p,q}_{\partial}(B^{\bullet,\bullet}) + \dim_\mathbb{K} H^{p,q}_{\overline\partial}(B^{\bullet,\bullet}) \right) \right. \nonumber \\[5pt]
 && \left. + \sum_{p+q=k-1} \left( \dim_\mathbb{K} H^{p,q}_{\partial} (B^{\bullet,\bullet}) + \dim_\mathbb{K} H^{p,q}_{\overline\partial}(B^{\bullet,\bullet}) \right)\right) \;. \nonumber 
 \end{eqnarray}
\end{thm}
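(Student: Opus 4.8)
The plan is to adapt the zigzag argument from the proof of Theorem \ref{thm:upper-bound} to this purely algebraic setting. The key structural observation driving that proof is that, given a nonzero Aeppli class at total degree $k$, one can follow a zigzag path through the double complex — alternately applying $\partial$ and $\overline\partial$ and resolving exactness ambiguities — that terminates in a nontrivial Dolbeault or conjugate-Dolbeault class at total degree $k$ or $k+1$. The crucial quantitative input is a bound on the \emph{length} of such a zigzag, which in the original theorem is the length $\min\{k+1,(2n-k)+1\}$ of the anti-diagonal at degree $k$. In the general double complex, this anti-diagonal length is replaced by the number of possibly-nonzero entries $B^{p,q}$ with $p+q=k$, and the support hypothesis is engineered precisely to control this.

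First I would verify that the support hypothesis $B^{p,q}=\{0\}$ for $p\notin\{\ell q,\ldots,\ell q+N\}$ forces at most $N+1$ nonzero entries along any anti-diagonal $p+q=k$. Indeed, as $q$ ranges over $\Z$ with $p=k-q$, the constraint $\ell q\le k-q\le \ell q+N$ becomes $\frac{k-N}{\ell+1}\le q\le \frac{k}{\ell+1}$ (interpreting the $\ell=0$ case separately, where the constraint is $0\le k-q\le N$ giving $q\in\{k-N,\ldots,k\}$), an interval of length $N/(\ell+1)\le N$ in $q$, hence at most $N+1$ integer values. This replaces the factor $\min\{k+1,(2n-k)+1\}$ by the uniform constant $N+1$. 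This step is routine but is where the hypothesis is actually used, so I would state it cleanly as the first lemma-like paragraph.

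Next I would rerun the four-case analysis of the Aeppli representative verbatim, since nothing in cases (1)--(4) used compactness, a metric, or finite-dimensionality beyond what is available here — only the algebra of the double complex $(B^{\bullet,\bullet},\partial,\overline\partial)$. The one genuine difference is that in the general setting $H^{p,q}_\partial$ and $H^{p,q}_{\overline\partial}$ need not have equal dimensions (there is no conjugation symmetry of an abstract $\mathbb K$-complex), so the endpoints of a zigzag land in $H^{p,q}_\partial\oplus H^{p,q}_{\overline\partial}$ rather than in a single copy counted twice; this is exactly why the right-hand side of \eqref{eq:upper-bound-aeppli-algebraic} sums both Dolbeault dimensions. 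I would set up the space of zigzags $\mathcal Z$ as before and choose a basis whose elements have linearly independent ends in the direct sum of the $\partial$- and $\overline\partial$-cohomologies at degrees $k$ and $k+1$. Counting: each basis zigzag contributes two independent end-classes, and each such zigzag has length at most $N+1$, yielding $h^k_A(B^{\bullet,\bullet})\le (N+1)\cdot(h^k_\partial+h^k_{\overline\partial}+h^{k+1}_\partial+h^{k+1}_{\overline\partial})$, which is \eqref{eq:upper-bound-aeppli-algebraic}. The Bott-Chern bound \eqref{eq:upper-bound-bottchern-algebraic} then follows either by the dual/opposite-complex argument (passing to $B^{p,q}\mapsto (B^{-p,-q})^\vee$, which swaps Aeppli and Bott-Chern and shifts degree $k\mapsto k-1$) or by directly running the dual zigzag; I would take whichever is cleaner, most likely the duality, mirroring how the Remark after Theorem \ref{thm:upper-bound} deduces the Bott-Chern statement from the Aeppli one.

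The main obstacle I anticipate is making the \emph{basis-of-zigzags counting} genuinely rigorous rather than heuristic, since in the original proof this step is sketched (``we can choose a basis $\mathcal B$ \dots with the property that its elements have linear-independent ends''). The delicate point is that the assignment from an Aeppli class to its zigzag involves choices (which primitive $\alpha^{p-1,q+1}$ one picks, and the normalizations in case (2.b) that move the class by $\imm\partial$ or $\imm\overline\partial$), so I must argue that after these normalizations the resulting linear map from $H^\bullet_A$ into the space of zigzags is injective and that projection onto the two end-coordinates has image of dimension at most $2(N+1)(h^k_\partial+\cdots)$. Concretely I would argue that distinct Aeppli classes yield zigzags with distinct ends unless they already differ by an element with trivial ends, and that the number of independent ends available is bounded by the total Dolbeault/conjugate-Dolbeault dimension at degrees $k,k+1$, with the length factor $N+1$ accounting for how many distinct anti-diagonal positions a single zigzag can occupy. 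Getting this bookkeeping exactly right — without over- or under-counting the degree-$k$ versus degree-$(k+1)$ contributions — is the part that requires care; everything else transfers mechanically from Theorem \ref{thm:upper-bound}.
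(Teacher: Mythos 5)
Your proposal is correct and follows essentially the same route as the paper: the paper's own proof simply says it ``goes exactly as in Theorem~\ref{thm:upper-bound}'', and the two points you isolate --- that the slanted-strip hypothesis bounds the number of nonzero entries on each anti-diagonal $p+q=k$ by $N+1$ (replacing $\min\{k+1,(2n-k)+1\}$), and that the absence of conjugation symmetry forces both $H^{\bullet,\bullet}_{\partial}$ and $H^{\bullet,\bullet}_{\overline\partial}$ to appear on the right-hand side --- are precisely the two remarks the paper makes about how the argument transfers. Your handling of the Bott-Chern inequality via the dual complex (mirroring how Remark~\ref{rmk:upper-bound} deduces the Bott-Chern bound from the Aeppli one), and your candid flagging of the zigzag-basis counting as the step needing care (it is equally sketched in the paper's proof of Theorem~\ref{thm:upper-bound}), are both consistent with the paper's treatment.
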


The proof goes exactly as in Theorem \ref{thm:upper-bound}. Note that we have to assume that there is a vertical strip of finite uniform width and shifted in the horizontal non-negative direction such that the double complex has support in this strip. In particular, this implies that the Fr\"olicher spectral sequences associated to the natural filtrations of the double complex degenerate in a finite number of steps, depending on $N$. Note also that the conjugate-Dolbeault cohomology $H^{\bullet,\bullet}_{\partial}(B^{\bullet,\bullet})$ and the Dolbeault cohomology $H^{\bullet,\bullet}_{\overline\partial}(B^{\bullet,\bullet})$ are no longer isomorphic, whence we need to consider both of them in the right-hand side of the inequalities \eqref{eq:upper-bound-aeppli-algebraic} and \eqref{eq:upper-bound-bottchern-algebraic}.

\subsection{Upper bound for symplectic Bott-Chern cohomology}
As a consequence, we can apply the above result to symplectic geometry. Let $X$ be a compact manifold of dimension $2n$ endowed with a symplectic structure $\omega$. Consider the exterior differential $d$ and the {\em symplectic co-differential} $d^\Lambda := [ d, -\iota_{\omega^{-1}} ]$. They satisfy $[d,d^\Lambda]=0$. Then, as in \cite{brylinski, cavalcanti-phd}, define the double complex
$$ \left( B^{\bullet_1,\bullet_2} \;:=\; \wedge^{\bullet_1-\bullet_2}X \otimes \beta^{\bullet_2} ,\; d\otimes\mathrm{id},\; d^\Lambda\otimes\beta \right) \;, $$
where $\beta$ is a generator of the infinite cyclic commutative group $\beta^\Z$. Note that, for any $q\in\Z$, we have
$$ B^{p,q} \;=\; \{0\} \qquad \text{ for } p\not\in\{q, \ldots, q+2n \} \;, $$
hence there exists a vertical strip as in the statement of Theorem \ref{eq:upper-bound-bottchern-algebraic} of width $N+1$ with $N:=2n$ such that the double complex $B^{\bullet,\bullet}$ has support in this strip.
The Bott-Chern and Aeppli cohomologies of $B^{\bullet,\bullet}$ are related to the symplectic cohomologies
$$ H^{\bullet}_{d+d^\Lambda}(X) \;:=\; \frac{\ker d \cap \ker d^{\Lambda}}{\imm d d^\Lambda} \qquad \text{ and } \qquad H^{\bullet}_{dd^\Lambda}(X) \;:=\; \frac{\ker dd^\Lambda}{\imm d+\imm d^\Lambda} $$
introduced and studied by L.-S. Tseng and S.-T. Yau, \cite{tseng-yau-1, tseng-yau-2}: more precisely,
$$
H^{\bullet_1,\bullet_2}_{BC}(B^{\bullet,\bullet}) \;=\; H^{\bullet_1-\bullet_2}_{d+d^\Lambda}(X) \otimes \beta^{\bullet_2}
\qquad \text{ and } \qquad
H^{\bullet_1,\bullet_2}_{A}(B^{\bullet,\bullet}) \;=\; H^{\bullet_1-\bullet_2}_{dd^\Lambda}(X) \otimes \beta^{\bullet_2} \;.
$$
The conjugate-Dolbeault and Dolbeault cohomologies of $B^{\bullet,\bullet}$ are both related to the de Rham cohomology of $X$.

In \cite[Theorem 4.4]{angella-tomassini-5}, it is proven that, for any $k\in\Z$, the inequality
$$ \dim_\R H^k_{d+d^\Lambda}(X) + \dim_\R H^k_{dd^\Lambda}(X) \;\geq\; 2\, \dim_\R H^k_{dR}(X;\R) \;, $$
and that the equality holds for any $k\in\Z$ if and only if $X$ satisfies the Hard Lefschetz Condition, equivalently, the $dd^\Lambda$-Lemma. 

By applying Theorem \ref{thm:upper-bound-algebraic}, we get the following upper bound.
(Note also that, with the same trick, analogous results can be obtained for generalized-complex structures in the sense of N. Hitchin \cite{hitchin}.)

\begin{thm}\label{thm:upper-bound-symplectic}
 Let $X$ be a compact differentiable manifold of dimension $2n$ endowed with a symplectic structure $\omega$. Then, for any $k\in\Z/2\Z$,
 $$
 \sum_{h = k \,\mathrm{mod}\, 2} \dim_\R H^{h}_{d+d^\Lambda}(X)
 \;\leq\;
 2(2n+1) \cdot \sum_{h\in\Z} \dim_\R H^{h}_{dR}(X;\R) \;,
 $$
 and
 $$
 \sum_{h = k \,\mathrm{mod}\, 2} \dim_\R H^{h}_{dd^\Lambda}(X)
 \;\leq\;
 2(2n+1) \cdot \sum_{h\in\Z} \dim_\R H^{h}_{dR}(X;\R) \;.
 $$
\end{thm}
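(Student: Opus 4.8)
The plan is to deduce Theorem~\ref{thm:upper-bound-symplectic} directly from the algebraic upper bound of Theorem~\ref{thm:upper-bound-algebraic}, by feeding into it the Brylinski--Cavalcanti double complex $\left(B^{\bullet_1,\bullet_2}, d\otimes\mathrm{id}, d^\Lambda\otimes\beta\right)$ already set up in the subsection. First I would verify the support hypothesis: since $B^{p,q}=\wedge^{p-q}X\otimes\beta^{q}$ vanishes unless $0\le p-q\le 2n$, the complex is supported in the vertical strip $\{q\le p\le q+2n\}$, which is exactly the hypothesis of Theorem~\ref{thm:upper-bound-algebraic} with $\ell=1$ and $N=2n$. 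Thus the theorem applies with constant $N+1=2n+1$.

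Next I would translate the cohomologies. The dictionary recalled just above the statement gives the bigraded identifications
$$
H^{p,q}_{BC}(B^{\bullet,\bullet}) \;=\; H^{p-q}_{d+d^\Lambda}(X)\otimes\beta^{q}, \qquad
H^{p,q}_{A}(B^{\bullet,\bullet}) \;=\; H^{p-q}_{dd^\Lambda}(X)\otimes\beta^{q},
$$
so that $\dim_\R H^{p,q}_{BC}(B^{\bullet,\bullet})=\dim_\R H^{p-q}_{d+d^\Lambda}(X)$ and similarly for the Aeppli side. Fixing the total degree $p+q=k$ and summing over the strip, the left-hand sides of the inequalities~\eqref{eq:upper-bound-aeppli-algebraic} and~\eqref{eq:upper-bound-bottchern-algebraic} collect precisely the dimensions $\dim_\R H^{h}_{dd^\Lambda}(X)$, respectively $\dim_\R H^{h}_{d+d^\Lambda}(X)$, with $h=p-q$. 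The key observation is that, along an anti-diagonal $p+q=k$, the horizontal index $h=p-q$ runs through all integers of a fixed parity, namely $h\equiv k \pmod 2$; this is the source of the index $k\in\Z/2\Z$ in the statement. Since $H^{h}_{d+d^\Lambda}(X)$ is nonzero only for $0\le h\le 2n$, each such group appears at most once as $k$ ranges over the two residues, so the sum over $h\equiv k\pmod 2$ on the left of the target inequality is exactly reproduced.

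For the right-hand side I would use that both the $\partial$- and $\overline\partial$-cohomologies of $B^{\bullet,\bullet}$ are related to the de Rham cohomology of $X$: each of the two vertical/horizontal cohomologies contributes a copy of $H^{\bullet}_{dR}(X;\R)$ (suitably reindexed by $\beta$), so the bracket on the right of~\eqref{eq:upper-bound-aeppli-algebraic} over the two anti-diagonals $p+q=k$ and $p+q=k+1$ is bounded by $2\sum_{h}\dim_\R H^{h}_{dR}(X;\R)$. Multiplying by $N+1=2n+1$ yields the constant $2(2n+1)$ in the statement. The same computation with~\eqref{eq:upper-bound-bottchern-algebraic} (using the anti-diagonals $p+q=k$ and $p+q=k-1$) gives the bound for $H^{h}_{dd^\Lambda}$ versus $H^{h}_{d+d^\Lambda}$; combining the two instances produces both displayed inequalities.

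The only genuinely delicate point is the bookkeeping between the bigraded total degree $p+q=k$ used in Theorem~\ref{thm:upper-bound-algebraic} and the single symplectic degree $h=p-q$ used in Theorem~\ref{thm:upper-bound-symplectic}, together with the parity collapse that turns $k\in\Z$ into $k\in\Z/2\Z$. I expect the main obstacle to be confirming that summing the algebraic inequality over \emph{all} anti-diagonals of a fixed parity (to capture every symplectic degree $h$ of that parity, since the $\beta$-grading makes each $H^{h}$ reappear along infinitely many anti-diagonals) still yields a finite bound; this is exactly where the support-strip hypothesis intervenes, forcing each $H^{h}_{d+d^\Lambda}(X)$ with $0\le h\le 2n$ to be counted a controlled number of times, so that the infinite sum over $\beta$-shifts collapses to the single finite factor $2\sum_h\dim_\R H^h_{dR}(X;\R)$. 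Once this accounting is pinned down, the rest is a direct substitution into Theorem~\ref{thm:upper-bound-algebraic}.
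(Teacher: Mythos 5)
Your proposal is correct and is essentially the paper's own proof: the published argument consists precisely of the dictionary you recall (strip hypothesis with $\ell=1$, $N=2n$; the identifications $H^{p,q}_{BC}(B^{\bullet,\bullet})=H^{p-q}_{d+d^\Lambda}(X)\otimes\beta^q$ and $H^{p,q}_{A}(B^{\bullet,\bullet})=H^{p-q}_{dd^\Lambda}(X)\otimes\beta^q$; both the $\partial$- and $\overline\partial$-cohomologies of $B^{\bullet,\bullet}$ computing de Rham cohomology up to reindexing) followed by a direct application of Theorem~\ref{thm:upper-bound-algebraic}, and your anti-diagonal bookkeeping matches this.

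One correction to your closing paragraph, though: the ``main obstacle'' you anticipate does not exist, and the remedy you sketch there would actually break the argument. You do \emph{not} need to sum the algebraic inequality over all anti-diagonals of a fixed parity: as your own middle paragraphs already establish, a \emph{single} anti-diagonal $p+q=k$ contains each symplectic degree $h\equiv k \pmod 2$ with $0\le h\le 2n$ exactly once, at the unique spot $(p,q)=\left(\tfrac{k+h}{2},\tfrac{k-h}{2}\right)$, so one application of \eqref{eq:upper-bound-aeppli-algebraic}, resp.\ \eqref{eq:upper-bound-bottchern-algebraic}, for any one representative $k\in\Z$ of the parity class is literally the claimed inequality. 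Had you instead summed over all anti-diagonals of that parity, each group $H^{h}_{dd^\Lambda}(X)$ (resp.\ $H^{h}_{d+d^\Lambda}(X)$) would be counted once per anti-diagonal, hence infinitely often, and both sides would be infinite; the strip hypothesis does not prevent this, since the strip is unbounded in the anti-diagonal direction. So the parity collapse requires no further ``pinning down'' beyond what you did: it is immediate from the fact that $h=p-q$ and $k=p+q$ have the same parity and determine each other once $k$ is fixed.
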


\begin{rmk}
 Note that Theorem \ref{thm:char-deldelbar-minus} does not admit a generalization to symplectic manifolds. In fact, for any compact symplectic manifold $X$, for any $k\in\Z$, it holds $\dim_\R H^{k}_{d+d^\Lambda}(X)=\dim_\R H^{k}_{dd^\Lambda}(X)$ by \cite[Proposition 3.24]{tseng-yau-1}, as a consequence of the validity of the Hard Lefschetz Condition on symplectic Bott-Chern and Aeppli cohomologies.
\end{rmk}

\end{document}